\theoremstyle{plain}
\newtheorem{Theorem}{Theorem}[section]
\newtheorem{Proposition}[Theorem]{Proposition}
\newtheorem{Lemma}[Theorem]{Lemma}
\newtheorem{Corollary}[Theorem]{Corollary}
\newtheorem{Conjecture}[Theorem]{Conjecture}
\theoremstyle{definition}
\newtheorem{Definition}[Theorem]{Definition}
\newtheorem{Remark}[Theorem]{Remark}
\newcommand{\cat}[1]{\text{\rm cat}\left(#1 \right)}
\newcommand{\TC}[1]{\text{\rm TC}\left(#1 \right)}
\newcommand{\higherTC}[2]{\text{\rm TC}_{#1} \left(#2 \right)}
\newcommand{\secat}[1]{\text{\rm secat}\left(#1 \right)}
\newcommand{\B}{$\bullet$}
\title{Surgery Applications to a Generalized Rudyak Conjecture}
\author{Jamie Scott}
\begin{document}
\maketitle

\begin{abstract}
Rudyak's conjecture states that $\cat{M} \geq \cat{N}$ given a degree one map $f:M \to N$ between closed manifolds. We generalize this conjecture to sectional category, and follow the methodology of \cite{Dranishnikov_Scott_2020} to get the following result:
\begin{Theorem}
Consider the following commutative diagram:
\[
\begin{tikzcd}
E^M \arrow[r, "\overline{f}"] \arrow[d, swap, "p^M"]
& E^N \arrow[d, "p^N"]
\\
M \arrow[r, "f"]
& N
\end{tikzcd}
\]
where $p^M$ and $p^N$ are fibrations, and $f$ is a normal map of degree $1$ between closed smooth manifolds. Let the fiber $F^N$ of $p^N$ be $(r-2)$-connected for some $r \geq 1$. If $f$ has zero surgery obstructions and $N$ satisfies $5 \leq \dim N \leq 2r \secat{p^N} - 3$, then $\secat{p^M} \geq \secat{p^N}$.
\end{Theorem}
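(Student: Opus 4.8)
Write $n = \dim N = \dim M$ and $s = \secat{p^N}$. By Schwarz's theory of the genus, $\secat{p^N} = s$ says the $s$-fold fiberwise join $q^N$ of $p^N$ admits no section over $N$, and $\secat{p^M} \ge s$ is equivalent to the $s$-fold fiberwise join $q^M$ of $p^M$ admitting no section over $M$. The commutative square yields a map $E^M \to f^{\ast}E^N$ of fibrations over $M$, and applying the $s$-fold fiberwise join construction gives a fiberwise map $q^M \to f^{\ast}q^N$ over $M$. Hence a section of $q^M$ would push forward to a section of $f^{\ast}q^N$ over $M$, i.e.\ to a lift $\tilde f \colon M \to E^N_{\ast s}$ of $f$ through $q^N$. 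So it suffices to show that no such lift $\tilde f$ exists, given that $q^N$ has no section.

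The plan is to combine metastable obstruction theory with surgery, following \cite{Dranishnikov_Scott_2020}. The fiber of $q^N$ is the $s$-fold join $(F^N)^{\ast s} \simeq \Sigma^{s-1}\bigl((F^N)^{\wedge s}\bigr)$, which is $(sr-2)$-connected and is an $(s-1)$-fold suspension; thus the first obstruction to a lift through $q^N$ appears in dimension $sr$, and the hypothesis $n \le 2rs - 3$ places every relevant obstruction group $\pi_i\bigl((F^N)^{\ast s}\bigr)$ with $i \le n$ inside the metastable range $i \le 2(sr-2)+1$, where the suspension structure makes such groups behave stably. Now invoke the surgery hypotheses: since $f$ is a degree-one normal map, surgery below the middle dimension turns $f$ into a $\lceil n/2\rceil$-connected normal map, and as the surgery obstructions vanish and $n = \dim N \ge 5$, the surgery can be completed to a homotopy equivalence $f' \colon M' \to N$, together with a normal bordism $(W;M,M')$ over $N$; after further surgery on the interior of $W$ one may assume $(W,M)$ is $\lceil (n+1)/2\rceil$-connected. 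One then extends the lift $\tilde f \colon M \to E^N_{\ast s}$ to a lift $\tilde F \colon W \to E^N_{\ast s}$ of $F \colon W \to N$: the obstructions to extending over the relative cells of $W$ lie in homotopy groups of $(F^N)^{\ast s}$; they vanish over cells of dimension at most $sr-1$ for connectivity reasons, and — this is the crux — are forced to vanish over the cells of dimension $sr$ through $n+1$ by the metastable/suspension input together with the freedom to modify $\tilde f$ and to perform further surgery on $W$. Restricting the resulting $\tilde F$ to $M'$ gives a lift of the equivalence $f'$, and precomposing with a homotopy inverse of $f'$ produces a section of $q^N$ over $N$, contradicting $\secat{p^N} = s$. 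Hence $\secat{p^M} \ge s$.

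Two points need care. First, the cohomological backbone of the comparison is that $f^{\ast}$ is injective on cohomology with arbitrary local coefficients, which holds because $f$ has degree one — the Umkehr homomorphism satisfies $f^{!}f^{\ast} = \deg(f)\cdot\mathrm{id}$ — so the primary section-obstruction class of $q^N$ pulls back to a nonzero class for $f^{\ast}q^N$; this obstruction-theoretic shadow of the argument must be reconciled with the geometric transport across $W$. Second, and this is the step I expect to be the main obstacle, is the vanishing of the obstructions to extending $\tilde f$ over the top-dimensional relative cells of $W$: a priori these sit in homotopy groups of $(F^N)^{\ast s}$ well above its connectivity, so their vanishing is not automatic, and it should follow — exactly as in \cite{Dranishnikov_Scott_2020} — only from the conjunction of the iterated-suspension (join) structure of the fiber, the precise inequality $n \le 2rs - 3$, and the additional surgeries available on the bordism. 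Essentially all of the work will be concentrated there.
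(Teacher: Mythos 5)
Your setup (argue by contradiction, use the hypothetical section of the fiberwise join of $p^M$ to produce a lift of $f$ through $\ast_N^{s}p^N$, do surgery to a homotopy equivalence $f'\colon M'\to N$ over a normal bordism $W$, extend the lift over $W$ by obstruction theory, restrict to $M'$ and compose with a homotopy inverse of $f'$) is exactly the paper's skeleton. But the step you yourself flag as the crux --- killing obstructions over relative cells of $W$ in dimensions $sr$ through $n+1$ by ``metastable/suspension input together with the freedom to modify $\tilde f$ and to perform further surgery on $W$'' --- is a genuine gap, and the route you sketch for it does not work as stated: $\pi_i\left((F^N)^{\ast s}\right)$ does not vanish in the metastable range, and nothing in your proposal explains how the suspension structure of the join, or modifications of the lift, would force those obstruction cocycles to die. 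Your auxiliary move of making $(W,M)$ highly connected goes in the wrong direction for this purpose, since it pushes the relative cells into high dimensions, precisely where the fiber has nontrivial homotopy. Likewise, the cohomological ``backbone'' you invoke (injectivity of $f^\ast$ with local coefficients via the Umkehr map) plays no role in the actual argument.

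The missing idea is dimension control on the surgeries, which is already built into the main theorem of surgery (Theorem~\ref{thm:surgery_obstruction}): when $\theta(f)=0$ and $\dim M\ge 5$, the normal bordism $W$ from $f$ to a homotopy equivalence can be taken to be the trace of surgeries in dimensions $\le \dim M/2$. By Proposition~\ref{prop:surgery_trace}, $(W,M)$ is then homotopy equivalent to $M$ with cells of dimension $n_i\le \dim M/2+1$ attached (and $n_i\le(\dim M-1)/2+1$ when $\dim M$ is odd). The fiber of $\ast_N^{s}p^N$ is $(sr-2)$-connected, and the hypothesis $\dim N\le 2rs-3$ (after the parity check: if $\dim M$ is even it actually gives $\dim M\le 2rs-4$) yields $n_i-1\le sr-2$ for every cell. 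Hence \emph{all} obstructions vanish for connectivity reasons alone: the cells of dimension $sr$ through $n+1$ that you were worried about simply never occur, and no metastable obstruction theory, no modification of $\tilde f$, and no further surgery on $W$ are needed. With that correction your outline closes up into the paper's proof.
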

Finally, we apply this result to the case of higher topological complexity when $N$ is simply connected.
\end{abstract}

\section{Introduction}

The (reduced) {\it Lusternik-Schnirelmann category}, $\cat X$, of an ANR space $X$ is the minimal number $k$ such that $X$ admits an open cover by $k+1$ sets $U_0,\dots,U_k$ that each admit a nulhomotopy of their inclusion map $U_i \hookrightarrow X$. Classically, this invariant is used as a lower bound for the number of critical points for smooth real-valued functions \cite{Lusternik_Schnirelmann_1929}. Rudyak's conjecture is the following statement:
\begin{Conjecture}[Rudyak's Conjecture]
If $f:M\to N$ is a degree one map between closed manifolds, then
$\cat M \geq \cat N$.
\end{Conjecture}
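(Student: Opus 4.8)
The plan is to adapt the surgery-theoretic strategy of \cite{Dranishnikov_Scott_2020} from the Lusternik--Schnirelmann setting to sectional category. Write $k = \secat{p^N}$, so there is an open cover $V_0, \dots, V_k$ of $N$ with sections $s_i \colon V_i \to E^N$ of $p^N$ over each $V_i$. The obstruction to building a similar cover of $M$ is that the pullbacks $f^{-1}(V_i)$ together with the composite sections $\overline{f} \circ s_i \circ f$ do split $p^M$ over a cover of $M$ — but this only shows $\secat{p^M} \le \secat{p^N}$, which is the wrong direction. Instead, the key idea (as in the Rudyak conjecture work) is to argue by contradiction: suppose $\secat{p^M} < \secat{p^N} = k$, i.e. $p^M$ admits sectional category at most $k-1$, and use this, together with surgery below the middle dimension, to improve $f$ to a homotopy equivalence — or at least to a map inducing enough of an equivalence that $\secat{p^N} \le \secat{p^M}$, contradicting our assumption.

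Concretely, I would proceed as follows. First, recall the characterization of $\secat{p^N} \le k$ in terms of the existence of a section of the $(k+1)$-fold fiberwise join $p^N_{k+1} \colon E^N_{k+1} \to N$ of $p^N$ with itself (Schwarz); the fiber of this join is the $(k+1)$-fold join $F^N * \cdots * F^N$, which is $((k+1)r - 2)$-connected since $F^N$ is $(r-2)$-connected. Second, do surgery on $f \colon M \to N$: since $f$ is a normal map of degree one with vanishing surgery obstructions and $\dim N \ge 5$, standard surgery theory lets us replace $M$ (within its normal cobordism class) by a manifold so that $f$ is $\lfloor \dim N / 2\rfloor$-connected, and in fact a homotopy equivalence once the obstruction (in $L_{\dim N}(\mathbb{Z}[\pi_1 N])$ or the appropriate surgery group) vanishes — which is exactly the ``zero surgery obstructions'' hypothesis. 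Having made $f$ a homotopy equivalence, pull back the fibration $p^N$ along a homotopy inverse $g \colon N \to M$ lifted to $E$; then $\secat{p^M} = \secat{p^N}$ directly, and in particular $\secat{p^M} \ge \secat{p^N}$, which is what we want. The dimension bound $\dim N \le 2r\,\secat{p^N} - 3$ enters to guarantee that the section of the fiberwise join survives the homotopy-theoretic comparison: the relevant obstruction classes to lifting live in cohomology groups $H^{j}(N; \pi_{j-1}(\text{fiber of join}))$ with $j \le \dim N$, and these vanish because the fiber of the join is $((k+1)r - 2) \ge (\dim N - 1)$-connected, using $k+1 \ge (\dim N + 3)/(2r) $... wait, more carefully, with $\dim N \le 2rk - 3$ we get connectivity at least $2rk - 2 \ge \dim N + 1$, so every obstruction vanishes and the section extends/transfers as needed.

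The main technical obstacle, and the heart of the argument, is reconciling \emph{two different} range conditions: surgery theory naturally makes $f$ highly connected only \emph{below} the middle dimension $\dim N / 2$, whereas to transfer a section of a highly-connected fibration one wants control \emph{up to} dimension $\dim N$. The resolution — which is the real content borrowed from the Dranishnikov--Scott method — is that one does \emph{not} need $f$ to be a full homotopy equivalence on the nose; it suffices that $f$ be $\lceil \dim N / 2 \rceil$-connected \emph{and} that the fiber $F^N$ be connected enough (the $(r-2)$-connectivity hypothesis) that a section of $p^N$, when pulled back along $f$, obstructs only in a range where $f$ already induces isomorphisms. Quantitatively: a section of $p^N$ over the $m$-skeleton of $N$ pulls back to a section of $p^M$ over $f^{-1}$ of that skeleton, and the obstruction to extending over higher skeleta of $M$ lies in $H^{j+1}(M, f^{-1}(N^{(j)}); \pi_j F^N)$; since $\pi_j F^N = 0$ for $j \le r - 2$ and $f$ is an isomorphism on homology in degrees $\le \dim N/2$, choosing $r$ large relative to $\dim N / (2k)$ — precisely $\dim N \le 2rk - 3$ — forces all these obstruction groups to vanish. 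Thus the plan is: (1) state Schwarz's fiberwise-join criterion; (2) invoke surgery below the middle dimension plus the vanishing-obstruction hypothesis to normalize $f$; (3) run the skeletal obstruction-theory comparison, using the connectivity of $F^N$ and the dimension bound to kill every obstruction, concluding $\secat{p^M} \ge \secat{p^N}$. I expect step (3), the obstruction-theoretic bookkeeping with the fiberwise joins, to require the most care.
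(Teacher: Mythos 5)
There is a fundamental mismatch here: the statement you were asked about is Rudyak's Conjecture itself, which the paper does not prove and explicitly describes as open in general. Your argument cannot establish it, because every step leans on hypotheses that are simply not present in the conjecture: that $f$ is a \emph{normal} map, that its surgery obstruction vanishes, that a fiber is $(r-2)$-connected, and that a dimension bound of the form $\dim N \leq 2r\,\secat{p^N}-3$ holds. What you have sketched is (a version of) the paper's conditional Theorem 3.1 about sectional category, not the unconditional statement $\cat{M} \geq \cat{N}$ for an arbitrary degree one map between closed manifolds. A correct review of your proposal must therefore say: the approach does not and cannot prove the stated conjecture; at best it targets the generalized, heavily hypothesized theorem the paper actually proves.

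Even read as a sketch of that conditional theorem, there is a genuine gap at the crucial point. Surgery does not modify $f:M\to N$ in place; it produces a normal bordism $F:W\to N$ from $f$ to a homotopy equivalence $f':M'\to N$ where $M'$ is a \emph{different} manifold. So the step ``having made $f$ a homotopy equivalence, pull back $p^N$ along a homotopy inverse; then $\secat{p^M}=\secat{p^N}$ directly'' is a non sequitur: a homotopy equivalence $M'\simeq N$ says nothing about $\secat{p^M}$ (or $\cat{M}$) without an argument transporting data across the bordism. The paper's mechanism is exactly this missing piece: assume $\secat{p^M}\leq q$ with $\secat{p^N}=q+1$, use Schwarz's criterion to get a section of the $(q+1)$-fold fiberwise join over $M$, push it forward to a lift $\lambda:M\to E^N_q$ of $f$, and then extend this lift cell-by-cell over the trace $W$ of the surgeries (cells of dimension at most $\dim M/2+1$), using that the fiber of the join $\ast^{q+1}F^N$ is $(r(q+1)-2)$-connected together with the bound $\dim N\leq 2r(q+1)-3$; composing the resulting lift with a homotopy inverse of $f'$ yields a section of $p^N_q$, contradicting $\secat{p^N}=q+1$. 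Note also that your obstruction groups are stated with coefficients $\pi_j F^N$, but the relevant fiber is the join $\ast^{q+1}F^N$; its improved connectivity $r(q+1)-2$ is precisely where the factor of $\secat{p^N}$ in the dimension hypothesis enters, and without it the numerology in your step (3) does not close.
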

This conjecture is motivated by the following two points:
\begin{enumerate}
    \item[\B] In some sense, the LS-category can be seen as a measure of how complicated the space $X$ is.
    \item[\B] A degree one map between closed manifolds induces a split surjection (injection) on homology (cohomology) \cite{Dranishnikov_Sadykov_2019, Rudyak_1999}.
\end{enumerate}
It's currently unknown whether this conjecture is true in general, but various special cases have been proven (e.g., see results in \cite{Rudyak_1999,Rudyak_2017}), the most recent of which is the following theorem:
\begin{Theorem}[\cite{Dranishnikov_Scott_2020}]\label{thm:surgery_Rudyak}
Let $f:M \to N$ be a normal map of degree one between closed smooth manifolds 
with $N$ being $(r-1)$-connected, $r\ge 1$. If $N$ satisfies the inequality
$\dim N \leq 2r \cat N - 3$, 
then {\rm $\cat M \geq \cat N$}.
\end{Theorem}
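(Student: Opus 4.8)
The plan is to argue by contradiction, combining Schwarz's description of Lusternik--Schnirelmann category as a sectional category with high-dimensional surgery, following the methodology of \cite{Dranishnikov_Scott_2020}. Write $n = \dim N$ and $k = \cat{N}$, and recall that $\cat{X} = \secat{q_X}$ for the based path fibration $q_X\colon P_0 X \to X$, whose fibre is the loop space $\Omega X$; when $X$ is $(r-1)$-connected, $\Omega X$ is $(r-2)$-connected. By Schwarz's theorem, $\cat{X} < k$ if and only if the $k$-fold fibrewise join of $q_X$, a fibration $(P_0 X)^{\ast_X k}\to X$ whose fibre is the $k$-fold join $(\Omega X)^{\ast k}$, admits a section; here $(\Omega X)^{\ast k}$ is $(kr-2)$-connected. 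Functoriality of the path-space construction sends $f$ to a map $\overline{f}\colon P_0 M\to P_0 N$ over $f$, and hence to an induced map of join fibrations $\overline{f}^{\,[k]}\colon (P_0 M)^{\ast_M k}\to (P_0 N)^{\ast_N k}$ over $f$.

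Assume now that $\cat{M} < k$. Then $(P_0 M)^{\ast_M k}\to M$ has a section $s$, and $\overline{f}^{\,[k]}\circ s\colon M\to (P_0 N)^{\ast_N k}$ is a lift of $f$ through the fibration $(P_0 N)^{\ast_N k}\to N$. Since $f$ is a degree-one normal map between closed manifolds with $\dim N\ge 5$ and vanishing surgery obstruction, I would perform surgery on $M$ strictly below the middle dimension and then kill the remaining middle-dimensional surgery kernel by one further round of surgery (possible exactly because the surgery obstruction vanishes), producing a normal cobordism $(W; M, M')$ with a map $F\colon W\to N$ that restricts to $f$ on $M$ and to a homotopy equivalence $f'\colon M'\to N$ on $M'$. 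The decisive point is that $W$ is built from $M\times I$ by attaching handles of index at most $\lfloor n/2\rfloor + 1$, so that $(W,M)$ has the homotopy type of a relative CW pair with cells only in dimensions $\le \lfloor n/2\rfloor + 1$.

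Next I would extend the lift $\overline{f}^{\,[k]}\circ s$ of $F|_M$ over all of $W$ to a lift $\Lambda\colon W\to (P_0 N)^{\ast_N k}$ of $F$, via relative obstruction theory. The obstructions lie in the groups $H^{j+1}\bigl(W,M;\pi_j((\Omega N)^{\ast k})\bigr)$, with coefficients in the local system induced along $F$ from the $\pi_1 N$-action on the fibre. For one of these groups to be nonzero there must be a relative cell of $(W,M)$ in dimension $j+1$, forcing $j\le \lfloor n/2\rfloor$, and there must hold $\pi_j((\Omega N)^{\ast k})\neq 0$, forcing $j\ge kr-1$; but the hypothesis $n\le 2rk-3$ gives $\lfloor n/2\rfloor\le kr-2$, so these conditions cannot both hold. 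Hence all obstruction groups vanish, $\Lambda$ exists, and $\Lambda|_{M'}$ is a lift of $f'$ through $(P_0 N)^{\ast_N k}\to N$. Composing with a homotopy inverse of $f'$ and using the homotopy lifting property of $(P_0 N)^{\ast_N k}\to N$ turns this into a genuine section, whence $\cat{N}<k$, contradicting $k=\cat{N}$. Therefore $\cat{M}\ge \cat{N}$.

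The step I expect to be the main obstacle is the surgery one: everything hinges on producing the normal cobordism $W$ with all handles of index at most $\lfloor n/2\rfloor + 1$ (surgery strictly below the middle dimension, plus one middle-dimensional round enabled by the vanishing obstruction) and on correctly identifying the local coefficient systems $\pi_j((\Omega N)^{\ast k})$ over $W$ and over $N$ along $F$. It is precisely this handle-range control that makes the metastable inequality $n\le 2rk-3$ just tight enough to annihilate every obstruction group; once it is in place, the rest of the argument is formal. Finally, nothing above used the path fibration in particular: replacing $q_N$ by an arbitrary fibration $p^N$ with $(r-2)$-connected fibre $F^N$, and $\overline{f}$ by the given vertical map $E^M\to E^N$, runs through verbatim and yields the sectional-category statement of the abstract, with $\secat{p^N}$ in place of $\cat{N}$.
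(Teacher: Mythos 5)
Your obstruction-theoretic core is fine, and it is essentially the paper's argument for Theorem \ref{thm:general_Rudyak_conjecture}: Schwarz's criterion, a section over $M$ pushed forward along the fibrewise-join map, the trace $W$ of surgeries viewed as a relative CW pair with cells of dimension at most $\lfloor \dim N/2\rfloor+1$, the $(kr-2)$-connectivity of the join of fibres, and the inequality $\dim N\le 2r\cat{N}-3$ killing every obstruction group, followed by composing with a homotopy inverse of $f'$. The problem is that this proves the wrong statement. Theorem \ref{thm:surgery_Rudyak} carries no hypothesis that the surgery obstruction of $f$ vanishes (nor that $\dim N\ge 5$), yet your surgery step explicitly invokes ``vanishing surgery obstruction'' to produce the normal bordism $(W;M,M')$ from $f$ to a homotopy equivalence. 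If $\theta(f)\in L_{\dim N}(\pi_1(N))$ is nonzero, Theorem \ref{thm:surgery_obstruction} gives no such bordism, your $W$ is never built, and the argument stops at the first step. As written, you have proved Theorem \ref{thm:general_Rudyak_conjecture} specialized to the based path fibration, which is strictly weaker than the statement at hand.

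The missing ingredient is the reduction to the zero-obstruction case, which the paper itself flags in the introduction as the decisive difference between the LS-category theorem and its sectional-category generalization: in \cite{Dranishnikov_Scott_2020} the obstruction of $f$ is removed by taking a connected sum of $M$ with a suitable highly connected closed manifold $P$ carrying a degree-one normal map to the sphere with opposite obstruction (compare Lemma \ref{lem:closed_plumbing} for the simply connected $4n$-dimensional situation), and the conclusion is transferred back to $M$ via the connected-sum formula $\cat{M \# P}=\max\{\cat{M},\cat{P}\}$, together with a separate treatment of the cases where Wall's theorem does not apply (low dimension, small category). No analogue of this connected-sum formula is available for general $\secat{p}$ or for topological complexity, which is exactly why the present paper must keep the zero-obstruction hypothesis in Theorem \ref{thm:general_Rudyak_conjecture}. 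By assuming the obstruction vanishes you have bypassed the one step that makes Theorem \ref{thm:surgery_Rudyak} true without that hypothesis, so your proposal needs this additional reduction before it establishes the stated result.
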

In a recent paper \cite{Rudyak_Sarkar_2020}, a new version of Rudyak's conjecture was considered in the context of topological complexity, but the present work first considers an even more general version of the conjecture for sectional category. Given a fibration $p:E \to B$, the {\it sectional category} of $p$, denoted $\secat{p}$, is the smallest integer $k$ such that $B$ can be covered by $k+1$ open sets $U_0,\dots,U_k$ that each admit a partial section $s_i:U_i \to E$ of $p$. So given a homotopy commutative diagram of the form
\[
\begin{tikzcd}
E^M \arrow[r, "\overline{f}"] \arrow[d, swap, "p^M"]
& E^N \arrow[d, "p^N"]
\\
M \arrow[r, "f"]
& N
\end{tikzcd}
\]
where $f:M \to N$ is a degree one map, we ask when the inequality $\secat{p^M} \geq \secat{p^N}$ is satisfied. In section 3, we prove that if $f$ has no surgery obstructions, then we get a theorem nearly identical to Theorem \ref{thm:surgery_Rudyak}. Unfortunately, unlike in \cite{Dranishnikov_Scott_2020}, we are unable to remove the assumption that there are no surgery obstructions. In \cite{Dranishnikov_Scott_2020}, the surgery obstruction of $f$ was removed by taking connected sums in light of the formula $\cat{M_1 \# M_2} = \max \{ \cat{M_1}, \cat{M_2} \}$ (see \cite{Dranishnikov_Sadykov_2019,Dranishnikov_Sadykov_2020}). In the general case, there are two problems with this approach: 1) in the case of  general fibrations it is unclear how to define the connected sum of fibrations and 2) when the connected sum does make sense, the formula is not necessarily known, e.g., the formula for the topological complexity of $M_1 \# M_2$ is not in general known in terms of the topological complexities of $M_1$ and $M_2$.

In section 4, we consider the case of higher topological complexity and restrict ourselves to when $N$ is simply-connected. When $\dim M \not\equiv 0 (\text{mod }4)$, we prove that the surgery obstruction of $f^{\times k}$ is trivial for all $k \geq 2$ and we obtain the desired result:

\begin{Theorem}
Let $f:M \to N$ be a normal map of degree $1$ between closed smooth manifolds such that $N$ is $(r-1)$-connected for some $r \geq 2$ (i.e., $N$ is simply connected). Suppose that $N$ satisfies the inequality $5 \leq k \dim N \leq 2r \higherTC{k}{N} - 3$. If $\dim N \not\equiv 0 (\text{\rm mod} 4)$, then $\higherTC{k}{M} \geq \higherTC{k}{N}$.
\end{Theorem}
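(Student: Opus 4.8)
The plan is to exhibit $\higherTC{k}{-}$ as the sectional category of a fibration built functorially from the manifold, so that the final theorem becomes a direct application of the first theorem of the introduction to the $k$-fold self-product $f^{\times k}$. Recall that $\higherTC{k}{X} = \secat{\Delta^X_k}$, where $\Delta^X_k \colon X^{[0,1]} \to X^k$ evaluates a path at $k$ equally spaced parameters; this is a fibration whose fiber is homotopy equivalent to $(\Omega X)^{k-1}$. Post-composition with $f$ gives a map $\overline{f} \colon M^{[0,1]} \to N^{[0,1]}$ fitting into a strictly commutative square over $f^{\times k} \colon M^k \to N^k$, so we are precisely in the setting of the first theorem with $p^M = \Delta^M_k$, $p^N = \Delta^N_k$, base manifolds $M^k$ and $N^k$, and structural map $f^{\times k}$.

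I would then verify the hypotheses one at a time. Since $N$ is $(r-1)$-connected with $r \geq 2$, the loop space $\Omega N$ is $(r-2)$-connected, hence so is the finite product $(\Omega N)^{k-1} \simeq F^N$; this gives the required connectivity of the fiber. The map $f^{\times k}$ is again a degree-one normal map between closed smooth manifolds, since degrees multiply and the external Whitney sum of the normal data for $f$ furnishes normal data for $f^{\times k}$. The dimension condition transcribes verbatim: $\dim(N^k) = k \dim N$ and $\secat{\Delta^N_k} = \higherTC{k}{N}$, so the hypothesis $5 \leq k\dim N \leq 2r\, \higherTC{k}{N} - 3$ is exactly what the first theorem asks for. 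Granting that $f^{\times k}$ has vanishing surgery obstruction, the theorem then produces $\higherTC{k}{M} = \secat{\Delta^M_k} \geq \secat{\Delta^N_k} = \higherTC{k}{N}$.

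The main obstacle --- and the one genuinely new ingredient --- is the vanishing of the surgery obstruction of $f^{\times k}$. Because $N$ is simply connected, so is $N^k$, and the obstruction lies in $L_{k\dim N}(\mathbb{Z})$. I would invoke the product formula for surgery obstructions (Morgan--Sullivan, Ranicki): applied to the splitting $f^{\times k} = f \times f^{\times (k-1)}$ it gives
\[
\theta\!\left(f^{\times k}\right) = \theta(f) \otimes \sigma^{*}\!\left(N^{k-1}\right) \; + \; \sigma^{*}(M) \otimes \theta\!\left(f^{\times(k-1)}\right),
\]
where $\sigma^{*}$ is the symmetric signature and the products use the $L^{*}(\mathbb{Z})$-module structure on $L_{*}(\mathbb{Z})$. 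Now $\sigma^{*}(M) \in L^{\dim N}(\mathbb{Z})$ and $\sigma^{*}(N^{k-1}) \in L^{(k-1)\dim N}(\mathbb{Z})$ both vanish when $\dim N \not\equiv 0 \pmod 4$: the symmetric signature of a closed oriented manifold is zero in every dimension not divisible by $4$ (in dimension $\equiv 2$ the middle intersection form is nonsingular skew-symmetric, hence hyperbolic; in odd dimensions the target group is zero), while in dimensions divisible by $4$ it equals the ordinary signature, and by multiplicativity $\mathrm{sign}(N^{k-1}) = \mathrm{sign}(N)^{k-1} = 0$ since $\mathrm{sign}(N) = 0$. Hence both summands vanish and $\theta(f^{\times k}) = 0$. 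The delicate points to pin down are the precise bookkeeping in (and hypotheses of) the product formula, the identification of the ambient $L$-group via simple-connectivity of $N^k$, and the dimension-wise vanishing of symmetric signatures; with these in hand the first theorem delivers the conclusion.
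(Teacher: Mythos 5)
Your overall strategy coincides with the paper's: identify $\higherTC{k}{X}$ with $\secat{\Delta^X_k}$, feed the square over $f^{\times k}:M^k \to N^k$ into the sectional-category theorem (this reduction is exactly the paper's Corollary \ref{cor:general_TCk_Rudyak}), and reduce everything to the vanishing of $\theta(f^{\times k})\in L_{k\dim N}(\mathbb{Z})$; your checks of fiber connectivity, normality of $f^{\times k}$, and the dimension count are all fine. Where you diverge is the vanishing step: the paper uses Browder's explicit simply connected product formula and argues case by case --- for odd $\dim N$ the obstruction dies because the odd quadratic $L$-groups vanish, and for $\dim N \equiv 2 \pmod 4$ with $k$ odd it computes $\theta(f^{\times k}) = \chi(N)^{k-1}\theta(f)$ and uses that $\chi(N)$ is even --- whereas you use the Morgan--Sullivan/Ranicki symmetric-signature formula and try to kill both summands by killing the symmetric signatures. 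Your handling of the $\dim N \equiv 2 \pmod 4$ case is correct and arguably cleaner than the paper's: $L^2(\mathbb{Z})=0$ kills $\sigma^*(M)$, and multiplicativity of the signature (with $\mathrm{sign}(N)=0$) kills $\sigma^*(N^{k-1})$ when $(k-1)\dim N \equiv 0 \pmod 4$.

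The one genuinely false statement is that the symmetric signature of a closed oriented manifold vanishes in every dimension not divisible by $4$ because ``in odd dimensions the target group is zero.'' The odd \emph{quadratic} $L$-groups $L_{2j+1}(\mathbb{Z})$ vanish, but the symmetric ones do not: $L^{4j+1}(\mathbb{Z}) \cong \mathbb{Z}_2$, detected by the de Rham invariant, which is nonzero for instance on the Wu manifold $SU(3)/SO(3)$. So when $\dim N \equiv 1 \pmod 4$ you are not entitled to $\sigma^*(M) = 0$ or $\sigma^*(N^{k-1}) = 0$. The conclusion survives for a different reason: when $\dim N$ is odd the quadratic obstruction $\theta(f) \in L_{\dim N}(\mathbb{Z}) = 0$, and an induction on $k$ through your own formula then gives $\theta(f^{\times j}) = 0$ for every $j$, since in each summand the quadratic factor vanishes --- no claim about symmetric signatures is needed (this is in effect how the paper disposes of the odd case). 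With that repair, plus the bookkeeping in the product formula that you already flagged as needing care, your argument is sound and delivers the same conclusion as the paper's proof.
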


Finally, we restrict ourselves to the case of $k=2$ in order to study the case when $\dim M \equiv 0 (\text{mod }4)$, and using what little is known about the topological complexity of a connected sum, we get the following somewhat less satisfying result:

\begin{Theorem}
Let $f:M \to N$ be a normal map of degree one between $(r-1)$-connected ($r \geq 2$) smooth closed $4n$-manifolds, and assume that $M$ satisfies the inequality $\frac{4n}{r} + 2 \leq \TC{M}$. Then $\TC{M} \geq \TC{N}$.
\end{Theorem}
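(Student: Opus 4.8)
\emph{Proof idea.} The plan is to deduce the statement from our main theorem, applied to the product map $f\times f\colon M\times M\to N\times N$ and the free-path fibrations $\pi_X\colon X^I\to X\times X$ (evaluation at the two endpoints), for which $\secat{\pi_X}=\TC{X}$. Suppose, toward a contradiction, that $\TC{M}<\TC{N}$. As these are integers, $\TC{N}\ge\TC{M}+1\ge\tfrac{4n}{r}+3$, whence $2r\,\TC{N}\ge 8n+6r\ge 8n+12$ and so $8n\le 2r\,\secat{\pi_N}-3$; together with $\dim(N\times N)=8n\ge 5$ (as $n\ge 1$), the numerical hypothesis of the main theorem is automatically in force. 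Since $N$ is $(r-1)$-connected, the fiber $\Omega N$ of $\pi_N$ is $(r-2)$-connected, and $f\times f$ is a degree-one normal map between closed smooth $8n$-manifolds, being a product of such. Thus the only hypothesis of the main theorem still to be arranged is the vanishing of the surgery obstruction of $f\times f$.

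This is the difficulty that, in the preceding theorem, was avoided by assuming $\dim N\not\equiv 0\pmod{4}$: here $\dim(M\times M)=8n$ is divisible by $4$, the obstruction $\sigma(f\times f)$ lies in $L_{8n}(\mathbb{Z})\cong\mathbb{Z}$, is detected by the signature, equals $\tfrac18\bigl(\operatorname{sign}(M)^2-\operatorname{sign}(N)^2\bigr)$, and is in general nonzero. To kill it I would imitate the connected-sum device of \cite{Dranishnikov_Scott_2020}: replace $f$ by a degree-one normal map $\widehat{f}\colon M\#W\to N$, built from $f$ together with a suitable degree-one normal map $W\to S^{4n}$ (so that $N\#S^{4n}=N$), arranged so that $\sigma(\widehat{f})=0$ and hence $\sigma(\widehat{f}\times\widehat{f})=0$ as well. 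One takes $W$ to be a closed smooth $(2n-1)$-connected $4n$-manifold carrying the required normal data (a suitable plumbing manifold), for which $\cat{W}\le 2$ and hence $\TC{W}\le\cat{W\times W}\le 4$. Moreover the hypothesis $\tfrac{4n}{r}+2\le\TC{M}$ forces $\TC{M}\ge 3$, so $M$ is not a homotopy sphere; as $M$ is a closed $(r-1)$-connected $4n$-manifold, Poincar\'e duality then gives $r\le 2n$, and consequently $\TC{M}\ge\tfrac{4n}{r}+2\ge 4\ge\TC{W}$.

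Applying the main theorem to $\widehat{f}\times\widehat{f}\colon (M\#W)\times(M\#W)\to N\times N$ and the path fibrations yields $\TC{M\#W}\ge\TC{N}$. To finish, I would invoke what is known about the topological complexity of a connected sum of highly-connected manifolds to bound $\TC{M\#W}$ from above by a quantity depending only on $\TC{M}$ and $\TC{W}$; since $\TC{W}\le\TC{M}$, this gives $\TC{M\#W}\le\TC{M}$. Combining, $\TC{M}\ge\TC{N}$, contradicting the assumption and proving the theorem.

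The main obstacle is precisely these two inputs on connected sums. First, one must be able to cancel the surgery obstruction by a connected sum at all; in the Lusternik--Schnirelmann setting of \cite{Dranishnikov_Scott_2020} the identity $\cat{M_1\#M_2}=\max\{\cat{M_1},\cat{M_2}\}$ made both this step and the passage back to the original manifold routine, whereas for $\TC$ there is no comparably clean formula, only partial estimates --- which may, in general, even force a connected sum on $N$ as well, with an auxiliary summand absorbing the part of the obstruction not realizable by a normal map to $S^{4n}$. It is exactly in order to bring these estimates to bear --- to secure $\TC{W}\le\TC{M}$ and to keep every manifold within the connectivity range in which they apply --- that the hypothesis $\tfrac{4n}{r}+2\le\TC{M}$ and the restriction to $k=2$ must be imposed, which is also why the conclusion is weaker than those obtained earlier. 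Checking the existence of the auxiliary manifold $W$ with the prescribed normal data, and the applicability of the connected-sum comparison for $\TC$, is the technical heart of the proof.
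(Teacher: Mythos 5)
Your outline is the paper's own route: argue by contradiction, check the numerical hypothesis for the path fibration over $N\times N$, kill the surgery obstruction by connected-summing $M$ with a suitable normal map to $S^{4n}$, apply the sectional-category theorem, and then come back down via connected-sum/wedge estimates for $\TC{}$. However, the two inputs you defer as ``the technical heart'' are exactly the content the proof has to supply, and your hedge about them points in the wrong direction. There is no realization problem and never any need to modify $N$: the auxiliary manifold is produced directly from $f$ itself. Take $h:\overline{M}\#N\to S^{4n}$ collapsing the complement of a disk; surgery below the middle dimension (always unobstructed) makes $h$ normally bordant to a $(2n)$-connected normal map $g:P\to S^{4n}$ with $P$ closed and $(2n-1)$-connected, and $\theta(g)=\tfrac18\left(I(N)-I(M)\right)=-\theta(f)$ on the nose. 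This also avoids plumbing, which would fail smoothly when $4n=4$. The second genuinely missing ingredient is that ``$f\#g$'' makes sense at all as a normal degree-one map $M\#P\to N\#S^{4n}=N$: for that one must first homotope $f$ so that it is a diffeomorphism over a disk in $N$ (cancelling pairs of oppositely-oriented preimage points of a regular value, using simple connectivity of $N$). Finally, $\theta(f\#g)=0$ gives $\theta\bigl((f\#g)^{\times 2}\bigr)=0$ via the product formula, as you indicate.

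On the descent step, the known estimates are $\TC{M\#P}\le\TC{M\vee P}$ (valid since $\TC{M}\ge\frac{4n+2}{r}$, which your hypothesis implies) and $\TC{M\vee P}\le\max\{\TC{M},\TC{P},\cat{M\times P}\}$, so the upper bound is \emph{not} a function of $\TC{M}$ and $\TC{P}$ alone: the term $\cat{M\times P}$ must be controlled, and this is where the hypothesis $\frac{4n}{r}+2\le\TC{M}$ is actually used, via $\cat{M\times P}\le\cat{M}+\cat{P}\le\frac{4n}{r}+2\le\TC{M}$ (using $\cat{M}\le 4n/r$ for an $(r-1)$-connected $4n$-manifold, and $\cat{P}\le 2$). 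Together with $\TC{P}\le 4\le\TC{M}$ --- your homotopy-sphere argument for this is fine, and parallels the paper's dismissal of the case $\cat{M}=1$ --- one gets $\TC{N}\le\TC{M\vee P}\le\TC{M}$, the desired contradiction. So the skeleton is right, your formula $\theta(f\times f)=\tfrac18\bigl(I(M)^2-I(N)^2\bigr)$ is correct, but as written the proposal omits precisely the construction of $P$, the well-definedness of the connected sum of normal maps, and the $\cat{M\times P}$ term in the wedge bound.
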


In section 4, the above theorem is stated assuming a weaker inequality.

\section{Preliminaries}

\subsection{Fiberwise Joins}

For $1 \leq i \leq n$, let $p_i:E_i \to B$ be a fibration with fiber $F_i$. The fiberwise join of the total spaces $E_i$ over the base $B$ is defined to be the subspace of the usual join $E_1 \ast ... \ast E_n$ given by set
\[ E_1 \ast_B ... \ast_B E_n
= \left\{ t_1e_1 + ... + t_ne_n \in E_1 \ast ... \ast E_n \mid t_i,t_j \neq 0 \implies p_i(e_i) = p_j(e_j) \right\}.\]
Then the fiberwise join of the fibrations $p_i$ is the map
\[ p_1 \ast_B ... \ast_B p_n: E_1 \ast_B ... \ast_B E_n \to B\]
given by
\[ t_1e_1 + ... + t_ne_n \mapsto p_i(e_i)\]
for any $i$ such that $t_i \neq 0$. This map is a well defined fibration by the definition of the space $E_1 \ast_B ... \ast_B E_n$.
This operation is called the fiberwise join as the fiber of $p_1 \ast_B ... \ast_B p_n$ is the join of the fibers $F_1 \ast ... \ast F_n$.

If $p_i=p:E \to B$ for all $i$, then we denote the fibration $p_1 \ast_B ... \ast_B p_n$ by $\ast_B^n p$ and we denote its total space $E_1 \ast_B ... \ast_B E_n$ by $\ast_B^n E$.

The following theorem connects the fiberwise joins with calculation of the sectional category:

\begin{Theorem} [\cite{Schwarz_1966}] \label{thm:Ganea-Schwarz_approach}
Let $p:E \to B$ be a fibration with $B$ normal. Then $\secat{p} \leq n$ if and only if $\ast_B^{n+1} p$ admits a section.
\end{Theorem}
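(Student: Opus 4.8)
This is the classical Ganea--Schwarz criterion, and the proof is a direct unwinding of the definition of sectional category. The plan is to treat the two implications separately, the two engines being a partition of unity in one direction and the barycentric-coordinate structure of the join in the other; normality of $B$ is used exactly once, to produce the partition of unity.

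For the forward direction --- that $\secat{p}\le n$ yields a section of $\ast_B^{n+1}p$ --- I would begin with a witnessing cover $U_0,\dots,U_n$ of $B$ together with partial sections $s_i\colon U_i\to E$. Since the cover is finite, the shrinking lemma for normal spaces provides $\{V_i\}$ with $\overline{V_i}\subseteq U_i$ still covering $B$, hence a subordinate partition of unity $\{\varphi_i\}_{i=0}^{n}$ with each $\varphi_i$ vanishing outside $U_i$ and $\sum_i\varphi_i\equiv 1$. The candidate section is
\[ \sigma(b) \;=\; \sum_{i=0}^{n}\varphi_i(b)\, s_i(b), \]
read as a point of the fiberwise join. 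Three checks remain, all routine: it is well defined because $\varphi_i(b)\neq 0$ forces $b\in U_i$, so $s_i(b)$ exists and $p(s_i(b))=b$, whence all active summands lie over the common point $b$ --- exactly the condition that cuts $\ast_B^{n+1}E$ out of the ordinary join; it is continuous, from continuity of the $\varphi_i$ and the $s_i$ together with the quotient description of the join; and $(\ast_B^{n+1}p)\circ\sigma=\mathrm{id}_B$ since $p\circ s_i=\mathrm{id}_{U_i}$.

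For the converse, take a section $\sigma\colon B\to\ast_B^{n+1}E$ and write $\sigma(b)=\sum_{i=0}^{n} t_i(b)\,e_i(b)$ in barycentric coordinates. The coordinate functions $t_i$ are continuous on the whole join, so $U_i:=\sigma^{-1}\{t_i>0\}$ is open in $B$, and $\{U_i\}_{i=0}^{n}$ covers $B$ because $\sum_i t_i\equiv 1$. On the open subset of $\ast_B^{n+1}E$ where the $i$-th coordinate is positive, the map sending a join point to its $i$-th entry in $E$ is continuous; composing it with $\sigma|_{U_i}$ gives $s_i\colon U_i\to E$, and $p\circ s_i=\mathrm{id}_{U_i}$ follows from the section hypothesis together with the definition of the fiberwise-join projection (which returns $p_i(e_i)$ for any active index $i$). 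This exhibits $\secat{p}\le n$.

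The one place that requires genuine care --- more bookkeeping than obstacle --- is the topology of the (fiberwise) join: verifying that the barycentric coordinates and the partial ``entry'' maps are continuous on the relevant open pieces, and that the displayed formula for $\sigma$ really lands in $\ast_B^{n+1}E$ and not merely in the abstract join. I would also note that the partial sections appearing here are honest sections, consistent with the definition of sectional category used in this paper; if one instead worked with sections only up to homotopy, the two notions coincide for fibrations by the homotopy lifting property, but this is not needed.
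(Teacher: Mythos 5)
The paper does not prove this statement at all---it is quoted from Schwarz's work---and your argument is precisely the classical proof of that cited result: a partition of unity subordinate to the witnessing cover (using normality and finiteness) to build a section of $\ast_B^{n+1}p$, and barycentric coordinates plus the entry maps of the fiberwise join to recover the cover and partial sections in the converse. Your proposal is correct, and your closing remarks on the join topology and on genuine versus homotopy sections flag the only points requiring care.
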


\subsection{Higher Topological Complexity}

Let $X$ denote the configuration space of a mechanical system, then a {\it motion planner} of the system is a map $\sigma:X\times X\to X^I$, $I=[0,1]$, to the path space such that $\sigma(x_0,x_1)(0)=x_0$ and $\sigma(x_0,x_1)(1)=x_1$. Clearly, if there exists a continuous motion planner, then $X$ is contractible. Since configuration spaces are rarely contractible, one has to partition $X\times X$ into  pieces such that on each piece there is a continuous motion planner. M. Farber called the minimal number of such pieces \cite{Farber_2003} (also see \cite{Farber_2008}) the {\em topological complexity} of $X$
and denoted it by $\TC{X}$. Or, more formally:

\begin{Definition}
Let $X$ be a path-connected topological space. The (reduced) {\it topological complexity} of $X$, denoted $\TC{X}$, is the least $k$ such that $X \times X$ can be covered by $k+1$ open subsets $U_0,...,U_k$ on which there are continuous motion planners. If no such $k$ exists, we define $\TC{X} = \infty$.
\end{Definition}

More generally, one could define a notion of topological complexity for motion planning for paths with $k-2$ intermediary stops. This is called {\it higher topological complexity} and was initially introduced in \cite{Rudyak_2010}. Again, more formally:

\begin{Definition}
Let $X$ be a path-connected topological space, let $k \geq 2$, and let $\Delta^X_k: X^I \to X^k$ be the path space fibration given by \[\Delta^X_k(\alpha) = (x_1,...,x_k).\]
Then the (reduced) {\it $k$-th topological complexity} of $X$ is $\higherTC{k}{X} := \secat{\Delta^X_k}$.
\end{Definition}

Note that $\TC{X} = \higherTC{2}{X}$ by definition, so that higher TC is indeed an extension of topological complexity.

Not much is known about the higher topological complexity of the connected sum of two manifolds in general, but we will use the following two known results in Section 4:

\begin{Theorem}[\cite{Dranishnikov_Sadykov_2019}]
\label{thm:connected_sum_inequality}
Let $M_1$ and $M_2$ be closed $(r-1)$-connected $n$-manifolds such that $\TC{M_i} \geq \frac{n+2}{r}$ for one of $i=1$ or $i=2$. Then
\[ \TC{M_1 \# M_2} \leq \TC{M_2 \vee M_2}.\]
\end{Theorem}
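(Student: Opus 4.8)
The plan is to compare $M_1 \# M_2$ with $M_1 \vee M_2$ through the pinch map and then transport a section across it, using the Ganea--Schwarz description of sectional category (Theorem~\ref{thm:Ganea-Schwarz_approach}). Let $S^{n-1} \subset M_1 \# M_2$ be the connecting sphere and $q \colon M_1 \# M_2 \to M_1 \vee M_2$ the map collapsing it to the wedge point. Since collapsing the boundary sphere of $M_i \setminus \mathrm{int}\, D^{n}$ recovers $M_i$, we have $(M_1 \# M_2)/S^{n-1} \simeq M_1 \vee M_2$, so $q$ fits in a cofiber sequence $S^{n-1} \to M_1 \# M_2 \xrightarrow{q} M_1 \vee M_2 \to S^{n}$. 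In particular $q$ induces an isomorphism on integral homology in degrees $\le n-1$, and since $M_1 \# M_2$ and $M_1 \vee M_2$ are simply connected ($r \ge 2$), the relative Hurewicz theorem shows that $\mathrm{hofib}(q)$ is $(n-2)$-connected, hence $\mathrm{hofib}(\Omega q) \simeq \Omega\,\mathrm{hofib}(q)$ is $(n-3)$-connected. Finally, each $M_i$ is a homotopy retract of $M_1 \vee M_2$ (include, then collapse the other summand), so monotonicity of topological complexity under retracts together with the hypothesis gives
\[
m := \TC{M_1 \vee M_2} \;\ge\; \max\bigl\{\, \TC{M_1},\, \TC{M_2} \,\bigr\} \;\ge\; \tfrac{n+2}{r},
\]
i.e.\ $mr \ge n+2$; this is the numerical input that will make the obstructions below vanish.

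Next, since $\secat{\Delta^{M_1 \vee M_2}_2} = \TC{M_1 \vee M_2} = m$ and $(M_1 \vee M_2)^{2}$ is normal, Theorem~\ref{thm:Ganea-Schwarz_approach} supplies a section $s$ of the fiberwise join $\ast^{m+1}_{(M_1 \vee M_2)^{2}}\Delta^{M_1 \vee M_2}_2$. Naturality of $\Delta^{(-)}_2$ under $q$, together with the fact that fiberwise joins commute with base change, produces a map of fibrations over $(M_1 \# M_2)^{2}$
\[
\Phi \colon \ast^{m+1}_{(M_1 \# M_2)^{2}} \Delta^{M_1 \# M_2}_2 \longrightarrow (q \times q)^{*}\bigl( \ast^{m+1}_{(M_1 \vee M_2)^{2}} \Delta^{M_1 \vee M_2}_2 \bigr),
\]
whose effect on fibers is the $(m+1)$-fold join $\ast^{m+1}(\Omega q)$. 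Pulling $s$ back along $q \times q$ gives a section of the target of $\Phi$; a lift of it through $\Phi$ is exactly a section of $\ast^{m+1}_{(M_1 \# M_2)^{2}}\Delta^{M_1 \# M_2}_2$, and one more application of Theorem~\ref{thm:Ganea-Schwarz_approach} then yields $\TC{M_1 \# M_2} = \secat{\Delta^{M_1 \# M_2}_2} \le m = \TC{M_1 \vee M_2}$. So it remains to build the lift.

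The lift is obtained by obstruction theory: after replacing $\Phi$ by a fibration, the obstructions to lifting a section through it lie in $H^{i+1}\bigl((M_1 \# M_2)^{2};\, \pi_i(\mathrm{hofib}\,\Phi)\bigr)$, where $\mathrm{hofib}\,\Phi \simeq \mathrm{hofib}(\ast^{m+1}\Omega q)$. To estimate its connectivity I would pass to cofibers and invoke the Blakers--Massey theorem: writing $\ast^{m+1}(\Omega q) \simeq \Sigma^{m}\bigl((\Omega q)^{\wedge(m+1)}\bigr)$ and feeding in the $(n-3)$-connectivity of $\mathrm{hofib}(\Omega q)$ and the $(r-2)$-connectivity of $\Omega(M_1 \# M_2)$ and $\Omega(M_1 \vee M_2)$, a cofiber computation shows $\mathrm{cofib}(\ast^{m+1}\Omega q)$ is $(mr+n-2)$-connected, whence $\mathrm{hofib}\,\Phi$ is $(mr+n-3)$-connected. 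As $mr \ge n+2$, this is at least $(2n-1)$-connected; since $(M_1 \# M_2)^{2}$ has dimension $2n$, for each $i$ either $\pi_i(\mathrm{hofib}\,\Phi) = 0$ or $H^{i+1}((M_1 \# M_2)^{2};-) = 0$, so every obstruction vanishes and the lift exists. I expect the main obstacle to be precisely this connectivity bookkeeping: one must track how the iterated fiberwise join acts on the (co)fiber sharply enough --- watching the Blakers--Massey ranges and the simple-connectivity hypotheses needed for Hurewicz and van Kampen --- to be sure $\mathrm{hofib}\,\Phi$ is $(2n-1)$-connected exactly under the hypothesis $\TC{M_i} \ge \tfrac{n+2}{r}$; the rest of the argument is formal.
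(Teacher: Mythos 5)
This statement is one the paper does not prove: it is imported from Dranishnikov--Sadykov, and the paper's only added content is the remark that, by inspecting their proof, the hypothesis $\TC{M_i}\geq\frac{n+2}{r}$ is needed for just one $i$ (also note the statement's ``$M_2\vee M_2$'' is a typo for $M_1\vee M_2$, as you correctly read it). So there is no in-paper proof to compare against; judged on its own, your argument is essentially correct and uses exactly the mechanism that makes the hypothesis appear where it does. Your connectivity bookkeeping checks out: $\Omega q$ is an $(n-2)$-connected map between $(r-2)$-connected loop spaces, so the cofiber of $\ast^{m+1}(\Omega q)\simeq\Sigma^m\bigl((\Omega q)^{\wedge(m+1)}\bigr)$ is $\bigl(m(r-1)+(n-2)+m\bigr)=(mr+n-2)$-connected, whence (relative Hurewicz, the joins being simply connected for $m\geq 1$) $\mathrm{hofib}\,\Phi$ is $(mr+n-3)$-connected; the hypothesis $mr\geq n+2$ is used precisely to push this to $(2n-1)$-connectivity so that every obstruction over the $2n$-dimensional base dies, and your retract observation $\TC{M_1\vee M_2}\geq\max\{\TC{M_1},\TC{M_2}\}$ is exactly what justifies the paper's ``one $M_i$ suffices'' remark. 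Two caveats worth recording: (i) your Hurewicz/connectivity claims (that $\mathrm{hofib}(q)$ is $(n-2)$-connected, and the homological-to-homotopical translations) require simple connectivity, i.e.\ $r\geq 2$ --- which is the only case in which the paper invokes the theorem, but your write-up should state it as a standing assumption; (ii) you silently use two standard facts that deserve a line each: that the homotopy fiber of a fiberwise map of fibrations over the same base agrees with the homotopy fiber of its restriction to fibers, and that the homotopy section produced by obstruction theory can be upgraded to a genuine section via the homotopy lifting property before applying Theorem~\ref{thm:Ganea-Schwarz_approach}. With those points made explicit, the argument stands.
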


Note that in \cite{Dranishnikov_Sadykov_2019}, Theorem \ref{thm:connected_sum_inequality} requires $\TC{M_i} \geq \frac{n+2}{r}$ for both $i=0$ and $i=1$, but by inspection of the proof only one of the $M_i$'s actually needs to satisfy the inequality.

\begin{Theorem}[\cite{Zapata_2021}]
\label{thm:wedge_equation}
Let $M$ and $N$ be closed manifolds. Then
\[ \TC{M \vee N} \leq \max \{ \TC{M}, \TC{N}, \cat{M \times N} \}.\]
\end{Theorem}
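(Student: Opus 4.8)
The plan is to prove the inequality directly, by exhibiting an open cover of $(M\vee N)\times(M\vee N)$ by $K+1$ subsets each admitting a continuous motion planner, where $K:=\max\{\TC M,\TC N,\cat{M\times N}\}$; since $\TC{M\vee N}$ is by definition the sectional category of the endpoint fibration $\Delta_2^{M\vee N}\colon (M\vee N)^I\to(M\vee N)^2$, this yields the claim. Write $X=M\vee N$ with wedge point $\ast$. Using that $M$ and $N$ are closed manifolds, fix a contractible open neighborhood $V$ of $\ast$ in $X$ of the form $V=V_M\cup V_N$ with $V_M=V\cap M$ and $V_N=V\cap N$ both contractible, together with a smaller such neighborhood whose closure lies inside $V$. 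Put $U_M:=M\cup V_N$ and $U_N:=N\cup V_M$: these are open in $X$, deformation retract onto $M$ and $N$ via retractions $r_M$ and $r_N$, cover $X$, and meet in $U_M\cap U_N=V$. Also let $P\subseteq M\setminus\{\ast\}$ and $Q\subseteq N\setminus\{\ast\}$ be the complements in $M$, resp. $N$, of closed collars of $\ast$; a point-set check shows $P$ and $Q$ remain open in $X$, that $M\setminus V_M\subseteq P$ and $N\setminus V_N\subseteq Q$, and consequently that the four open sets $U_M\times U_M$, $U_N\times U_N$, $P\times Q$ and $Q\times P$ cover $X^2$.

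Next I would bound the sectional category of $\Delta_2^X$ restricted to each of these four regions. On $U_M\times U_M$, which deformation retracts onto $M\times M$ through $r_M\times r_M$, pull back a motion-planning cover of $M\times M$ realizing $\TC M$ and pre/post-compose its planners with the retraction paths; this produces motion planners in $X$ on $\TC M+1\le K+1$ open sets covering $U_M\times U_M$, so the restricted fibration has sectional category at most $\TC M$, and symmetrically at most $\TC N$ over $U_N\times U_N$. On $P\times Q$ one routes through the basepoint: for $(a,b)$ with $a\in P\subseteq M$ and $b\in Q\subseteq N$, concatenate a path from $a$ to $\ast$ inside $M$ with a path from $\ast$ to $b$ inside $N$. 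A continuous such choice over an open subset of $M\times N$ is precisely a local section of $(P_\ast M\to M)\times(P_\ast N\to N)$; since a based path in a product is a pair of based paths, this product fibration is canonically the path fibration $P_\ast(M\times N)\to M\times N$, whose sectional category is $\cat{M\times N}$. Pulling back a realizing cover (again inserting the retraction paths at the ends) gives motion planners on $\cat{M\times N}+1\le K+1$ open sets covering $P\times Q$, and since $P\times Q$ and $Q\times P$ are disjoint the $Q\times P$ region is treated the same way and the two covers merge index by index. Thus each of the four regions carries a motion-planning open cover of size at most $K+1$.

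The genuinely delicate step — and the main obstacle — is to amalgamate these four covers into a single open cover of all of $X^2$ with only $K+1$ members; merely invoking subadditivity of sectional category over four open sets would give the far weaker $\TC X\le 4K+3$. The four regions overlap exactly in thin collars of the wedge point: for instance $U_M\times U_M$ and $U_N\times U_N$ meet in $V\times V$, while $U_M\times U_M$ meets $P\times Q$ in a set of the form $P\times(\text{collar of }\ast\text{ in }N)$. On these overlaps the three competing strategies — ``stay in $M$'', ``stay in $N$'', and ``route through $\ast$'' — must be made to agree, which forces one to build the motion-planning covers of $M\times M$ and $N\times N$ and the categorical covers of $M$ and $N$ from a common, carefully chosen categorical cover of a neighborhood of $\ast$: one arranges that over every collar of the wedge point only one member of each family is active and that the planner it carries is the straight retraction path supplied by the contractibility of $V_M$ and $V_N$, and that near $M\times\{\ast\}$ the $\TC M$-cover restricts to the categorical cover of $M$ used in the mixed part (matching the path-to-$\ast$ planners), similarly for $N$. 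Granting this compatibility, the $i$-th members of the four families union to a set $O_i$ on which the four partial motion planners glue to a single one, so $X^2=O_0\cup\dots\cup O_K$, and the theorem follows.
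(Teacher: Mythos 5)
The theorem you are proving is imported by the paper from \cite{Zapata_2021} without proof, so your argument has to stand on its own, and as written it does not: everything up to the last paragraph is sound but routine, and the one step that actually constitutes the theorem is asserted rather than proved. Your decomposition of $(M\vee N)^2$ into $U_M\times U_M$, $U_N\times U_N$, $P\times Q$, $Q\times P$, the pullback of a $\TC{M}$-cover (resp.\ $\TC{N}$-cover) through the retraction $r_M\times r_M$ (resp.\ $r_N\times r_N$), and the identification of ``route through $\ast$'' planners over the mixed part with local sections of the based path fibration $P_\ast(M\times N)\to M\times N$, whose sectional category is $\cat{M\times N}$, are all correct; merging the two disjoint mixed regions index by index is also fine. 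But at that stage you only have three families of open sets, of sizes $\TC{M}+1$, $\TC{N}+1$ and $\cat{M\times N}+1$, covering overlapping regions, and naive subadditivity gives roughly $3K+2$ sets, not $K+1$.

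The amalgamation is dismissed with ``Granting this compatibility,'' and the compatibility you describe is neither established nor obviously achievable. You ask that the $\TC{M}$-realizing cover of $M\times M$ ``restricts near $M\times\{\ast\}$ to the categorical cover of $M$ used in the mixed part'': these are covers of different spaces with different cardinalities ($\TC{M}+1$ versus $\cat{M\times N}+1$ or $\cat{M}+1$), and an arbitrary optimal cover of $M\times M$ admits no such normalization; arranging that ``only one member of each family is active'' on every collar of the wedge point and that the planners there are exactly the retraction paths is a strong structural condition on all three covers simultaneously. Even if the index-wise unions $O_i$ could be formed, the partial planners glue to a continuous planner only if they literally agree on the overlaps (or the members being united are disjoint), and that is precisely what has to be engineered. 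This engineering is the actual content of the result in \cite{Zapata_2021} (and of the related wedge computations of Dranishnikov), where it is handled by a structured construction rather than by surgery on arbitrary optimal covers. As it stands, your write-up identifies the obstacle correctly but leaves the theorem unproved at exactly that point.
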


\subsection{Surgery}

Suppose that $S\subset M$ is an embedded $k$-sphere $S^k$ into an $n$-manifold $M$ that has a trivial tubular neighborhood $N=S^k\times D^{n-k}$. Thus, $N\subset\partial(D^{k+1}\times D^{n-k})$. The   $n$-manifold 
\[ M'=M\setminus Int(N)\cup (D^{k+1}\times\partial D^{n-k})\]
is said to be obtained from $M$ by {\em the $k$-surgery along the sphere $S$}. The {\em trace of this surgery} is defined to be the bordism $W$ between $M$ and $M'$ defined as 
\[ W = (M\times[0,1])\cup (D^{k+1}\times D^{n-k}).\]
By gluing together such bordisms, we can define the trace of a chain of surgeries. Moreover, we get the following homotopy equivalence of pairs that will later allow us to apply obstruction theory:

\begin{Proposition}[\cite{Crowley_Luck_Macko_2021,Dranishnikov_Scott_2020}]
\label{prop:surgery_trace}
There is a homotopy equivalence of pairs \[h:(M\cup_{\phi_1}D^{n_1}\cup_{\phi_2}D^{n_2}\cup\dots\cup_{\phi_k}D^{n_k},M)\to (W,M)\]
where  $W$ is the trace of a chain of surgeries in dimensions $n_1-1,\dots,n_k-1$ originating on a manifold $M$.
\end{Proposition}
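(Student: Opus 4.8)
The plan is to establish the case of a single surgery and then induct on the length of the chain, carrying the incoming boundary $M=M\times\{0\}\subset W$ along at every stage. For a single $(n_1-1)$-surgery on $M$ along an embedded $S^{n_1-1}\times D^{m}$, $m=\dim M-n_1+1$, the trace is the pushout
\[
W=(M\times[0,1])\cup_{S^{n_1-1}\times D^{m}}\bigl(D^{n_1}\times D^{m}\bigr),
\]
the handle being attached along $S^{n_1-1}\times D^{m}\subset\partial(D^{n_1}\times D^{m})$, identified with a tubular neighbourhood of the core sphere in $M\times\{1\}$. Since both legs of this pushout are cofibrations, it is a homotopy pushout. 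The projections $D^{n_1}\times D^{m}\to D^{n_1}$ and $S^{n_1-1}\times D^{m}\to S^{n_1-1}$ are homotopy equivalences, and together with the identity on the corner $M\times[0,1]$ they form a map from the defining square of $W$ to the square $M\times[0,1]\xleftarrow{\phi_1}S^{n_1-1}\hookrightarrow D^{n_1}$, where $\phi_1$ is the core sphere $S^{n_1-1}\times\{0\}\hookrightarrow M\times\{1\}$; this map of squares commutes up to homotopy because the tubular-neighbourhood embedding, restricted to each slice $S^{n_1-1}\times\{y\}$, is homotopic to the core. The gluing lemma for homotopy pushouts then gives a homotopy equivalence $W\simeq(M\times[0,1])\cup_{\phi_1}D^{n_1}$, and since $M\times\{0\}$ plays no role in the handle attachment this equivalence may be taken rel $M\times\{0\}$. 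Finally, $\phi_1$ is homotopic inside $M\times[0,1]$ to a map into $M\times\{0\}$, and attaching the cell by homotopic maps, followed by the deformation retraction of the cylinder onto $M\times\{0\}$, yields the homotopy equivalence of pairs $(W,M)\simeq(M\cup_{\phi_1}D^{n_1},M)$.

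For a chain of surgeries, let $W_j$ be the trace of the first $j$ of them, so that $W_{j+1}$ is obtained from $W_j$ by attaching the $(j+1)$-st handle along a tubular neighbourhood of a sphere $S^{n_{j+1}-1}$ in the outgoing boundary component $M_j\subset\partial W_j$. Running the single-surgery argument with $W_j$ in place of the cylinder shows that $(W_{j+1},M)\simeq(W_j\cup_{\psi_{j+1}}D^{n_{j+1}},M)$, where $\psi_{j+1}\colon S^{n_{j+1}-1}\to W_j$ is the core sphere composed with $M_j\hookrightarrow W_j$. Transporting $\psi_{j+1}$ across the homotopy equivalence of pairs $(W_j,M)\simeq(M\cup_{\phi_1}D^{n_1}\cup\dots\cup_{\phi_j}D^{n_j},M)$ supplied by the inductive hypothesis produces an attaching map $\phi_{j+1}$, and one further appeal to the fact that homotopic attaching maps yield adjunction spaces homotopy equivalent rel the common subspace gives $(W_{j+1},M)\simeq(M\cup_{\phi_1}D^{n_1}\cup\dots\cup_{\phi_{j+1}}D^{n_{j+1}},M)$, completing the induction.

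The substance of the argument is entirely in the bookkeeping of the pair structure: at every step one must ensure that the homotopy equivalences, and the homotopies realizing them, can be chosen to fix (or at least to restrict to a homotopy equivalence of) the incoming boundary $M\times\{0\}$. This is the main obstacle, and it is the reason I would phrase the single-surgery step through the explicit comparison of pushout squares, in which the corner $M\times[0,1]$, and hence $M\times\{0\}$, is literally untouched, rather than invoking a bare homotopy equivalence $W\simeq M\cup_{\phi_1}D^{n_1}$. The smoothing of corners required to make $W$ a genuine smooth manifold is immaterial here, since it does not affect homotopy type, and the invocation of the gluing lemma could alternatively be replaced by the standard handle-theoretic deformation retraction of $D^{n_1}\times D^{m}$ onto its core as found in \cite{Crowley_Luck_Macko_2021}.
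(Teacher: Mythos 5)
Your argument is correct and is essentially the standard one behind this cited result (the paper itself gives no proof, deferring to \cite{Crowley_Luck_Macko_2021,Dranishnikov_Scott_2020}): each handle $D^{n_i}\times D^{m}$ deformation retracts onto its core union the attaching tube, so the trace is, rel the incoming boundary, the previous stage with an $n_i$-cell attached along the surgered sphere, and one then slides attaching maps down and inducts over the handles exactly as you do. Your homotopy-pushout phrasing of the single-handle step is just a repackaging of that deformation retraction, so there is nothing genuinely different to compare.
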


One of the goals of surgery is to kill off parts of the homotopy groups (i.e., $\ker f_\ast$) of a manifold in order to change a map degree one $f$ into a homotopy equivalence, but to do so we need to add the additional technical condition that our map is normal:

\begin{Definition}[\cite{Browder_1972}]
Let $f:M \to N$ be a degree one map between smooth closed manifolds. Then $f$ is {\it normal} if there is a vector bundle $\nu$ on $N$ such that $\tau_M\oplus f^\ast\nu$ is a stably trivial bundle, where $\tau_M$ denotes the tangent bundle of $M$.
\end{Definition}

Moreover, the existence of a bordism of $f$ to a homotopy equivalence can be encoded algebraically:

\begin{Theorem}[The main theorem of surgery~\cite{Wall_1999}]
\label{thm:surgery_obstruction}
For every group $\pi$ there is a 4-periodic sequence of abelian groups $L_n(\pi)$ such that in dimension $n\ge 5$ for any degree one normal map $f:M\to N$ between closed manifolds there is an element
$\theta(f)\in L_n(\pi_1(N))$, called {\rm the surgery obstruction}, such that $f$
is (normally) bordant by means of a sequence of surgeries in dimensions
$\le\dim M/2$ to a (simple) homotopy equivalence if and only if
the surgery obstruction $\theta(f)\in L_n(\pi_1(N))$ is trivial. 
\end{Theorem}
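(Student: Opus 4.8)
The plan is not to reprove this result --- it is the fundamental theorem of surgery theory, due to Wall \cite{Wall_1999} in the form stated (and to Browder \cite{Browder_1972} and Novikov when $\pi_1(N)=1$) --- but to recall the shape of the argument, since the objects it produces (surgery kernels, the quadratic refinement of the intersection form, the $L$-groups) are exactly what Section~3 uses. Set $\Lambda = \mathbb{Z}[\pi_1(N)]$, let $\widetilde M$ be the pullback of the universal cover $\widetilde N$, and, replacing $f$ by the inclusion of $M$ into its mapping cylinder, form the surgery kernels $K_i(M) = \ker\bigl(f_* \colon H_i(\widetilde M) \to H_i(\widetilde N)\bigr)$; because $\deg f = 1$, $f_*$ is split surjective on homology, so the $K_i(M)$ measure precisely the failure of $f$ to be a homology, and ultimately a homotopy, equivalence. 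The normal structure --- a bundle $\nu$ over $N$ with $\tau_M \oplus f^*\nu$ stably trivial --- is exactly the data needed to perform \emph{framed} surgeries. A preliminary round of $0$- and $1$-surgeries along loops arranges that $f$ is $1$-connected and $\pi_1(M) \cong \pi_1(N) = \pi$.

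Next, surgery below the middle dimension. For $i < \lfloor n/2 \rfloor$ one eliminates $K_i(M)$ by induction on $i$: lift a generator to the relevant homotopy group $\pi_{i+1}(f)$, represent it by an embedded sphere $S^i \subset M$ (possible by general position, since $2i < n$), use the normal data to frame its trivial normal bundle, and do framed surgery; this strictly shrinks $K_i$ while leaving the lower kernels untouched. The conclusion is that $f$ may be taken $\lfloor n/2\rfloor$-connected. This is the step where $n \ge 5$ is needed, so that the Whitney trick is available in the top of this range.

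Now the middle dimension, where the single obstruction lives. If $n = 2k$, Poincar\'e duality forces $K_k(M)$ to be a finitely generated stably free $\Lambda$-module carrying a nonsingular $(-1)^k$-Hermitian intersection form $\lambda$, and the framing supplies a quadratic refinement $\mu$ recording the framed self-intersections of the immersed representing spheres; one sets $\theta(f) = [(K_k(M),\lambda,\mu)] \in L_{2k}(\pi)$, its Witt class. If $n = 2k+1$ one makes $f$ $k$-connected and encodes the indeterminacy in killing $K_k(M)$ by a formation, whose stable class is $\theta(f) \in L_{2k+1}(\pi)$. One verifies that $\theta(f)$ depends only on the normal bordism class of $f$, and that adjoining trivial handles (framed surgery on nullhomotopic spheres) alters the form only by a hyperbolic, resp.\ trivial, summand. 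Hence $\theta(f) = 0$ means exactly that the form is stably hyperbolic (resp.\ the formation stably trivial), so after stabilizing one can choose framed surgeries in the middle dimension(s) killing all remaining $K_*(M)$; the relative Hurewicz and Whitehead theorems --- with Whitehead torsion tracked for the simple version --- then give that $f$ has become a (simple) homotopy equivalence, and the converse is the bordism invariance just noted. Finally, the $4$-periodicity $L_n(\pi) \cong L_{n+4}(\pi)$ falls out of the algebra: $L_{2k}(\pi)$ is the Witt group of $(-1)^k$-quadratic forms over $\Lambda$, depending only on $k \bmod 2$, and likewise for the odd groups; conceptually it is realized by product with $\mathbb{CP}^2$.

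The genuinely hard part --- and the reason one cites \cite{Wall_1999} rather than reconstructs this --- is carrying out surgery \emph{up to} the middle dimension with full control of the $\Lambda$-module structure of the kernels, together with the careful definition of the quadratic refinement $\mu$ and the proofs that it is well-defined and that the Witt class of $(K_k(M),\lambda,\mu)$ is a normal-bordism invariant. Those are the technical heart of the theory and lie well outside what we would reprove here.
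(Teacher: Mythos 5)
This theorem is quoted from Wall and the paper supplies no proof of its own, so your choice to defer to \cite{Wall_1999} and only recall the shape of the argument (surgery kernels over $\mathbb{Z}[\pi_1(N)]$, framed surgery below the middle dimension, the quadratic form or formation in the middle dimension, normal-bordism invariance, and 4-periodicity) matches the paper's treatment, and the outline you give is accurate. The one small imprecision is that the hypothesis $n\ge 5$ is really needed for the Whitney trick in the middle-dimensional step (realizing the algebraic cancellation geometrically), not for the surgeries below the middle dimension, which need only general position; this does not affect the correctness of your summary.
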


One of the main assumptions of Theorem \ref{thm:general_Rudyak_conjecture} is that the surgery obstruction is trivial so that we can do surgery. Then in section 4 we work in the simply connected case where our L-groups are explicit and relatively simple, so we don't really work with general L-groups in this work.

\subsection{Simply-Connected Surgery}

If we assume $N$ is simply-connected, then the surgery obstructions of $f$ are much simpler:
\begin{enumerate}
    \item[\B] $L_{4k+1}(0) = 0$ so that $\theta(f) = 0$;
    \item[\B] $L_{4k+2}(0) = \mathbb{Z}_2$ and $\theta(f)$ is the Kervaire invariant of $f$, which is defined to be the Arf invariant of the intersection form of $f$ in $\mathbb{Z}_2$ coefficients;
    \item[\B] $L_{4k+3}(0) = 0$ so that $\theta(f) = 0$
    \item[\B] $L_{4k}(0) = \mathbb{Z}$ and $\theta(f) = \frac{1}{8} I(f) = \frac{1}{8} \left( I(M)-I(N) \right)$ where $I(f)$ is the signature of the intersection form of $f$ in $\mathbb{Z}$ coefficients.
\end{enumerate}

\begin{Theorem}[\cite{Browder_1972}] \label{thm:surgery_product_formula}
Let $f_i:M_i \to N_i$ for $i \in \{1,2\}$ be normal maps of degree one between smooth closed manifolds such that each $N_i$ is simply connected.
\begin{enumerate}
    \item[(i)] If $\dim\left(M_1 \times M_2 \right) \equiv 0 (\text{\rm mod } 4)$, then
    \[\theta(f_1 \times f_2) = I(N_1) \theta(f_2) + I(N_2) \theta(f_1) + 8 \theta(f_1) \theta(f_2).\]
    \item[(ii)] If $\dim\left(M_1 \times M_2 \right) \equiv 2 (\text{\rm mod } 4)$, then
    \[\theta(f_1 \times f_2) = \chi(N_1) \theta(f_2) + \chi(N_2) \theta(f_1).\]
\end{enumerate}
\end{Theorem}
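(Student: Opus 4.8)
The strategy I would follow is Browder's: one reads the surgery obstruction of a simply connected surgery problem off the middle-dimensional surgery kernel together with its quadratic form (its signature divided by $8$ in the $4k$-dimensional case, its Arf invariant in the $(4k{+}2)$-dimensional case), and then pushes the K\"unneth theorem through that description, so that almost all of the work is bookkeeping with forms rather than new geometric input. First I would check that $f_1\times f_2$ is a degree one normal map with simply connected target, so that $\theta(f_1\times f_2)$ is defined by Theorem \ref{thm:surgery_obstruction}: degrees multiply, $\pi_1(N_1\times N_2)=0$, and $\tau_{M_1\times M_2}\oplus(f_1\times f_2)^\ast(\nu_1\times\nu_2)$ is the external product of the stably trivial bundles $\tau_{M_i}\oplus f_i^\ast\nu_i$, hence stably trivial; by $4$-periodicity this obstruction lies in $\mathbb Z$ in case (i) and in $\mathbb Z_2$ in case (ii). Since $\theta$ is a normal bordism invariant I may first surger each $f_i$ below its middle dimension, which concentrates the surgery kernel $K_\ast(M_i)$ in degree $\dim M_i/2$ when $\dim M_i$ is even; when $\dim M_i$ is odd I may surger $f_i$ to a homotopy equivalence ($L_{\mathrm{odd}}(0)=0$), after which every term of both clauses vanishes, so only even-dimensional factors require attention.

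For case (i), in the principal sub-case $\dim M_1\equiv\dim M_2\equiv 0\pmod 4$ I would combine multiplicativity of the signature on closed oriented manifolds (Thom; Chern-Hirzebruch-Serre) with the identity $I(M_i)=8\theta(f_i)+I(N_i)$, so that $8\,\theta(f_1\times f_2)=I(M_1)I(M_2)-I(N_1)I(N_2)=\bigl(8\theta(f_1)+I(N_1)\bigr)\bigl(8\theta(f_2)+I(N_2)\bigr)-I(N_1)I(N_2)$, and dividing by $8$ gives exactly the right-hand side of (i); in the remaining sub-cases of (i) all four signatures vanish, so $\theta(f_1\times f_2)=0$. For case (ii), exactly one factor, say $M_1$, has dimension $\equiv 2\pmod 4$ and the other $\equiv 0\pmod 4$ (if both are odd-dimensional there is nothing to prove), so $\theta(f_1\times f_2)$ is an Arf invariant. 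Here I would run K\"unneth on the surgery kernel, using the splitting $H_\ast(M_i)\cong H_\ast(N_i)\oplus K_\ast(M_i)$ forced by $\deg f_i=1$: the middle-dimensional kernel of $f_1\times f_2$ then decomposes, up to hyperbolic summands and Tor-terms, into the blocks $K_\ast(M_1)\otimes H_{\mathrm{mid}}(N_2)$, $H_{\mathrm{mid}}(N_1)\otimes K_\ast(M_2)$ and $K_\ast(M_1)\otimes K_\ast(M_2)$, where $H_{\mathrm{mid}}$ is homology in the middle degree. Evaluating the quadratic refinement, the last two blocks contribute an even number of copies of their forms --- here one uses that $N_1$ is $(4k{+}2)$-dimensional, so that $\chi(N_1)$, and hence $\dim H_{\mathrm{mid}}(N_1;\mathbb Z_2)$, is even, and that a surgery-kernel intersection form in a dimension $\equiv 0\pmod 4$ is even, hence of even rank --- while $K_\ast(M_1)\otimes H_{\mathrm{mid}}(N_2)$ contributes one copy of the Arf form of $f_1$ for each $\mathbb Z_2$-generator of $H_{\mathrm{mid}}(N_2)$, of which there are $\equiv\chi(N_2)\pmod 2$ by Poincar\'e duality; summing Arf invariants yields $\chi(N_1)\theta(f_2)+\chi(N_2)\theta(f_1)$, with the first summand zero for dimension reasons.

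The real obstacle is the behaviour of the \emph{quadratic} refinement --- not merely the underlying bilinear intersection form --- under the tensor products appearing in these K\"unneth decompositions: the Arf invariant of a tensor product of quadratic forms is not the product of the Arf invariants, and it is exactly this subtlety that produces the coefficient $8$ in (i) and the Euler characteristics in (ii). This is the step where I would lean on Browder's middle-dimensional form computations rather than redo them, alongside the routine but delicate tasks of tracking torsion in the $K_\ast(M_i)$ (invisible to $\mathbb Z_2$-coefficient computations but relevant to the signature version), checking that the preliminary surgeries leave $\theta(f_1\times f_2)$ unchanged, and keeping straight which dimension sub-case each clause of the theorem is meant to cover.
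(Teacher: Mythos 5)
The paper does not prove this statement at all: Theorem \ref{thm:surgery_product_formula} is quoted from Browder \cite{Browder_1972} and used as a black box, so there is no proof of record to compare against. Judged on its own, your proposal is sound in outline and, for part (i), essentially complete and pleasantly elementary: once one knows $\theta(f)=\tfrac18\bigl(I(M)-I(N)\bigr)$ in dimensions divisible by $4$ (stated in the paper's preliminaries) and multiplicativity of the signature with the convention $I=0$ off dimensions divisible by $4$, the identity $8\,\theta(f_1\times f_2)=I(M_1)I(M_2)-I(N_1)I(N_2)$ gives (i) in every sub-case, including the $2\pmod 4$ times $2\pmod 4$ and odd--odd sub-cases where the right-hand side is interpreted as $0$ exactly as in the paper's Remark. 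Your preliminary reductions (the product is again a degree one normal map with simply connected target; normal bordism invariance of $\theta$; surgering odd-dimensional factors to homotopy equivalences) are all correct.

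The one place where your argument is not yet a proof is part (ii): the K\"unneth decomposition of the surgery kernel and the counting of blocks is the right skeleton (hyperbolic pieces from complementary degrees, $\chi(N_2)\equiv\dim H_{\mathrm{mid}}(N_2;\mathbb Z_2)\pmod 2$ by Poincar\'e duality, evenness of $\chi(N_1)$ and of the rank of an even kernel form), but the decisive step --- that the quadratic refinement on $K_\ast(M_1)\otimes H_{\mathrm{mid}}(N_2)$ really is an orthogonal sum of copies of the Arf form of $f_1$, and that the $K_1\otimes K_2$ and $H_{\mathrm{mid}}(N_1)\otimes K_2$ blocks contribute trivially at the level of the quadratic (not just bilinear) structure --- is precisely the content of Browder's middle-dimensional computation, and you explicitly defer it to him rather than prove it. Since the paper itself cites Browder for the whole theorem, this is a reasonable stopping point, but be aware that as written your text establishes (i) and only structures, rather than proves, (ii).
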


\begin{Remark} Some notation and interpretations of Theorem \ref{thm:surgery_product_formula}:
\begin{enumerate}
    \item[\B] $\chi$ is the Euler characteristic modulo $2$.
    \item[\B] The signature of a manifold $I(M)$ is the signature of a normal map of degree $1$ of the form $M \to S^n$.
    \item[\B] If $\dim M_i$ is odd but $\dim(M_1 \times M_2)$ is even, we interpret $\theta(f_i)$ as $0$ sitting inside of the relevant obstruction group so that $\theta(f_1 \times f_2) = 0$.
    \item[\B] If $\dim M_i \equiv 2 (\text{mod }4)$ and $\dim(M_1 \times M_2) \equiv 0 (\text{mod }4)$, then $I(N_i) = 0$ and $8$ acting on the element $\theta(f_1)\theta(f_2)$ in $\mathbb{Z}_2$ gives $0$ so that $\theta(f_1 \times f_2) = 0$.
\end{enumerate}
\end{Remark}

Before the following lemma, note that $I(M \# N) = I(M) + I(N)$ and $I(\overline{M}) = -I(M)$ where $\overline{M}$ is the manifold $M$ taken with the opposite orientation. Both of these facts are also true for normal maps of degree one, but one must first justify that the connected sum of normal maps of degree one is well-defined, which we do in section 4.

\begin{Lemma}
\label{lem:closed_plumbing}
Let $f:M \to N$ be a normal map of degree one between smooth closed $4n$-manifolds with $N$ simply-connected. Then there is some $(2n-1)$-connected smooth closed manifold $P$ and a normal map of degree one $g:P \to S^{4n}$ with $\theta(g) = -\theta(f)$.
\end{Lemma}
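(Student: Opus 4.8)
The plan is to realize $-\theta(f)$ directly by a plumbing construction, exploiting that over a simply connected base the surgery obstruction in dimension $4n$ is simply $\tfrac18$ of a signature. Since $N$ is simply connected, $\theta(f)=\tfrac18 I(f)=\tfrac18\bigl(I(M)-I(N)\bigr)\in\mathbb{Z}=L_{4n}(0)$; and for any degree one normal map $g\colon P\to S^{4n}$ we likewise have $\theta(g)=\tfrac18 I(g)=\tfrac18\bigl(I(P)-I(S^{4n})\bigr)=\tfrac18 I(P)$. So it suffices to build a $(2n-1)$-connected closed smooth $4n$-manifold $P$ admitting a degree one normal map to $S^{4n}$ with $I(P)=-I(f)$.

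First I would make $f$ highly connected: surgery below the middle dimension, which is always possible, replaces $f$ by a normally bordant $(2n-1)$-connected map whose surgery kernel $K_{2n}$ carries a unimodular symmetric bilinear form $\lambda_f$ over $\mathbb{Z}$ of signature $I(f)$; in particular $8\mid I(f)$, which is exactly the statement $\theta(f)\in\mathbb{Z}$. Then I would plumb $D^{2n}$-bundles over $S^{2n}$ along a graph realizing a unimodular even symmetric form of signature $-I(f)$ --- for example $|\theta(f)|$ copies of $\pm E_8$, assembled from tangent disc bundles $D(TS^{2n})$, which have the correct Euler number $2$ and are stably trivial. This yields a compact parallelizable $(2n-1)$-connected $4n$-manifold $W$ with $H_{2n}(W)$ free, with intersection form of signature $-I(f)$ (so $I(W)=-I(f)$), and with $\partial W$ a homotopy $(4n-1)$-sphere since the form is unimodular.

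The decisive step is to close $W$ up without altering its signature, that is, to control $\Sigma:=\partial W$. Here I would invoke the Kervaire--Milnor analysis of the group $bP_{4n}$ of homotopy $(4n-1)$-spheres bounding parallelizable manifolds: the class $[\Sigma]\in bP_{4n}$ is detected by $I(W)$ modulo $8\,|bP_{4n}|$, so once one knows that $I(f)$ is divisible by $8\,|bP_{4n}|$ the boundary $\Sigma$ is the standard sphere and one may cap off, $P:=W\cup_{S^{4n-1}}D^{4n}$. This $P$ is a smooth closed $4n$-manifold obtained from $W$ (which is homotopy equivalent to a wedge of copies of $S^{2n}$) by attaching a single top cell, hence $(2n-1)$-connected, and $I(P)=I(W)=-I(f)$. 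Since $W$ is parallelizable, $\tau_P$ is stably trivial on $P\setminus\{\mathrm{pt}\}$, so $[\tau_P]$ lies in the image of $g^{\ast}\colon\widetilde{KO}(S^{4n})\to\widetilde{KO}(P)$ for the collapse map $g\colon P\to S^{4n}$; choosing a vector bundle $\nu$ on $S^{4n}$ with $g^{\ast}[\nu]=-[\tau_P]$ then makes $\tau_P\oplus g^{\ast}\nu$ stably trivial, so $g$ is a degree one normal map. Finally $\theta(g)=\tfrac18 I(P)=-\tfrac18 I(f)=-\theta(f)$.

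I expect essentially all of the real content to sit in the third paragraph: the plumbing and the bundle bookkeeping making $g$ normal are routine, but showing that $\partial W$ may be taken standard --- which amounts to pinning down the arithmetic of the intersection form of $f$ forcing $8\,|bP_{4n}|\mid I(f)$ --- is the delicate point, and is what makes the construction go through. The low-dimensional case $n=1$ should be handled separately, with Rokhlin's theorem (giving $16\mid I(f)$) in place of $8\,|bP_4|\mid I(f)$.
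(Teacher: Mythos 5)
Your reduction (it suffices to produce a closed $(2n-1)$-connected $P$ with a degree one normal map to $S^{4n}$ and $I(P)=-I(f)$) is reasonable, but the construction you propose has a genuine gap exactly where you locate the ``real content''. Plumbing copies of $\pm E_8$ does give a parallelizable $(2n-1)$-connected $W$ with $I(W)=-I(f)$ and $\partial W$ a homotopy sphere, but the class of $\partial W$ in $bP_{4n}$ is determined by $I(W)/8=-\theta(f)$ modulo $|bP_{4n}|$, so you can cap off with a smooth disk only if $|bP_{4n}|$ divides $\theta(f)$. You simply assert this divisibility (``once one knows that $I(f)$ is divisible by $8\,|bP_{4n}|$'') and offer no argument; none follows from the hypotheses. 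Making $f$ highly connected gives an even unimodular kernel form, hence $8\mid I(f)$, but that is all: the divisibility by $8\,|bP_{4n}|$ is the Kervaire--Milnor constraint on signatures of \emph{closed almost parallelizable} manifolds, and neither $M$ nor $N$ is assumed almost parallelizable, so $\theta(f)=\tfrac18\bigl(I(M)-I(N)\bigr)$ is in general not divisible by $|bP_{4n}|$. Without that divisibility $\partial W$ is an exotic sphere, your $P=W\cup_{S^{4n-1}}D^{4n}$ does not exist as a smooth manifold (coning the boundary only yields a topological manifold), and the argument stops. The same objection applies to your $n=1$ remark: Rokhlin's theorem constrains closed spin (almost parallelizable) $4$-manifolds, not $I(f)$ for a general normal map $f$.

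The paper avoids this issue entirely by never creating a boundary: it starts from the closed manifold $\overline{M}\#N$ with the collapse map $h:\overline{M}\#N\to S^{4n}$, performs surgery below the middle dimension to replace $h$ by a normally bordant $(2n)$-connected degree one normal map $g:P\to S^{4n}$ (so $P$ is $(2n-1)$-connected), and then computes, using bordism invariance of the obstruction and additivity of the signature, $\theta(g)=\theta(h)=\tfrac18 I(\overline{M}\#N)=\tfrac18\bigl(I(N)-I(M)\bigr)=-\theta(f)$. Since the input is already a closed manifold, no capping-off step and no $bP_{4n}$ arithmetic ever arise; that is the step your plumbing route cannot supply.
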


\begin{proof}
Consider the degree one normal map $h:\overline{M} \# N \to S^{4n}$ given by collapsing $\left( \overline{M} \# N \right) \setminus D^{4n}$ to a point, where $D^{4n}$ is an embedded disk. Surgery can always be done below the middle dimension, i.e., $h$ is bordant to some $(2n)$-connected degree one normal map $g:P \to S^{4n}$. Since $g$ a is $(2n)$-connected map to a $(4n)$-sphere, the closed manifold in its domain, $P$, must be $(2n-1)$-connected. Moreover, since $g$ is bordant to $h$ so that
\[ \theta(g)
= \theta(h)
= \frac{1}{8}I(h)
= \frac{1}{8} \left( I(\overline{M} \# N) - I(S^{4k}) \right)\]
\[ = \frac{1}{8} \left( I(\overline{M}) + I(N) \right)
= \frac{1}{8} \left( I(N) - I(M) \right)
= -\theta(f) \qedhere\]
\end{proof}

\section{Rudyak's Conjecture for Sectional Category}

The following Theorem only requires minor edits in the proof of Lemma 3.2 given in \cite{Dranishnikov_Scott_2020}:

\begin{Theorem} \label{thm:general_Rudyak_conjecture}
Consider the following commutative diagram:
\[
\begin{tikzcd}
E^M \arrow[r, "\overline{f}"] \arrow[d, swap, "p^M"]
& E^N \arrow[d, "p^N"]
\\
M \arrow[r, "f"]
& N
\end{tikzcd}
\]
where $p^M$ and $p^N$ are fibrations, and $f$ is a normal map of degree $1$ between closed smooth manifolds. Let the fiber $F^N$ of $p^N$ be $(r-2)$-connected for some $r \geq 1$. If $f$ has zero surgery obstructions and $N$ satisfies $5 \leq \dim N \leq 2r \secat{p^N} - 3$, then $\secat{p^M} \geq \secat{p^N}$.
\end{Theorem}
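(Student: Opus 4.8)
The plan is to transcribe the proof of Lemma~3.2 of \cite{Dranishnikov_Scott_2020} almost verbatim, with Schwarz's fiberwise‑join fibrations $\ast_B^{k}p$ and Theorem~\ref{thm:Ganea-Schwarz_approach} playing the role that the Ganea fibrations and the Ganea–Schwarz description of $\cat{-}$ play there. Write $n=\dim M=\dim N$ and $k=\secat{p^N}$, and suppose, for contradiction, that $\secat{p^M}\le k-1$. Since $\theta(f)=0$ and $n\ge 5$, Theorem~\ref{thm:surgery_obstruction} yields a normal bordism $(W;M,M')$ together with a map $F\colon W\to N$ with $F|_M=f$ and $g:=F|_{M'}\colon M'\to N$ a homotopy equivalence, where $W$ is the trace of a chain of surgeries in dimensions $\le n/2$. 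By Proposition~\ref{prop:surgery_trace}, the pair $(W,M)$ is homotopy equivalent as a pair to $(M\cup_{\phi_1}D^{n_1}\cup\dots\cup_{\phi_m}D^{n_m},\,M)$ with each $n_i\le n/2+1$; since the $n_i$ are integers and $n\le 2rk-3$, this gives $n_i\le rk-1$ for all $i$.

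Next I would push the hypothetical section of $p^M$ over to $N$. Commutativity of the square gives a map of fibrations over $M$ from $p^M$ to $f^{\ast}p^N$; taking $k$-fold fiberwise joins, which commute with pullback so that $\ast_M^{k}(f^{\ast}p^N)=f^{\ast}(\ast_N^{k}p^N)$, yields a map $\ast_M^{k}p^M\to f^{\ast}(\ast_N^{k}p^N)$ of fibrations over $M$. By Theorem~\ref{thm:Ganea-Schwarz_approach}, the assumption $\secat{p^M}\le k-1$ furnishes a section of $\ast_M^{k}p^M$, and composing gives a section of $f^{\ast}(\ast_N^{k}p^N)\to M$, i.e.\ a lift $\widetilde f\colon M\to \ast_N^{k}E^N$ of $f$ along $\ast_N^{k}p^N$. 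The fiber of $\ast_N^{k}p^N$ is the $k$-fold join $(F^N)^{\ast k}$, and since the join of an $a$-connected and a $b$-connected space is $(a+b+2)$-connected and $F^N$ is $(r-2)$-connected, $(F^N)^{\ast k}$ is $(rk-2)$-connected.

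Now I would run obstruction theory over $W$. Using the relative CW structure on $(W,M)$ from the first paragraph, the obstruction to extending $\widetilde f$ across an $n_i$-cell lies in $\pi_{n_i-1}\big((F^N)^{\ast k}\big)$, which vanishes because $n_i-1\le rk-2$; hence $\widetilde f$ extends to a lift $\widetilde F\colon W\to \ast_N^{k}E^N$ of $F$. Restricting $\widetilde F$ to $M'$ and composing with a homotopy inverse of $g$ produces a map $N\to \ast_N^{k}E^N$ lifting a self-map of $N$ homotopic to the identity, which the homotopy lifting property of $\ast_N^{k}p^N$ converts into an honest section of $\ast_N^{k}p^N$. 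By Theorem~\ref{thm:Ganea-Schwarz_approach} this forces $\secat{p^N}\le k-1$, contradicting $\secat{p^N}=k$; therefore $\secat{p^M}\ge\secat{p^N}$.

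The surgery ingredients are quoted directly from the cited results, and the dimension hypothesis is used only in the cell-by-cell computation above, where $\dim N\le 2r\secat{p^N}-3$ is exactly what makes the obstructions vanish (integrality of the $n_i$ is what lets one pass from $n_i\le n/2+1$ to $n_i\le rk-1$). The step I expect to require the most care — and the only genuine departure from \cite{Dranishnikov_Scott_2020} — is verifying that Schwarz's fiberwise-join model is functorial in the way the argument needs: that the commutative square really induces the map $\ast_M^{k}p^M\to f^{\ast}(\ast_N^{k}p^N)$ and that fiberwise joins commute with pullback, so that the Ganea-fibration proof transcribes line by line. If the square is only homotopy commutative, one first replaces $\overline f$ by a fibration and proceeds as above.
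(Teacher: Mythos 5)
Your proposal is correct and follows essentially the same route as the paper: the same surgery bordism and cell-by-cell obstruction argument over the trace $(W,M)$, with the section of the $k$-fold fiberwise join pushed forward along $\overline f$ and the homotopy inverse of the equivalence $M'\to N$ supplying the contradictory section of $\ast_N^{k}p^N$. Your single integrality estimate $n_i\le rk-1$ neatly replaces the paper's separate even/odd treatment of $\dim M$, but this is only a cosmetic difference.
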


\begin{proof}
By way of contradiction assume that $\secat{p^M} < \secat{p^N}$ and let $\secat{p^N} = q+1$. Let $\overline{f}_{q}: E^M_{q-1} \to E^N_{q}$ be the map induced by the above commutative diagram.
Since $\secat p^M \leq q$, there is a section $s^M:M \to E^M_{q}$ of $p^M_{q}$ so that the diagram
\[
\begin{tikzcd}
E^M_{q} \arrow[r, "\overline{f}"]
& E^N_{q} \arrow[d, "p_{q}^N"]
\\
M \arrow[r, swap, "f"] \arrow[u, "s^M"]
& N
\end{tikzcd}
\]
commutes. Let $\lambda = \overline{f} s^M$. Since $f$ has zero surgery obstruction, there is some normal bordism $F:W \to N$ of $f$ to some homotopy equivalence $f':M' \to N$ where the manifold $W$ is the trace of surgeries in dimensions $\leq \frac{\dim M}{2}$.
This gives us the following homotopy commutative diagram:
\[
\begin{tikzcd}
M \arrow[r,"\lambda"] \arrow[d, hook] & E^N_{q} \arrow[d, "p_{q}^N"]
\\ W \arrow[r, swap, "F"] \arrow[ur, dashed, "L"]
& N
\end{tikzcd}
\]
Our goal is to find a lift $L$ of $F$, labeled above, that extends $\lambda$ to $W$.
In view of Proposition \ref{prop:surgery_trace}, there is a homotopy equivalence of pairs
\[h:(M \cup_{\varphi_1}D^{n_1} \cup_{\varphi_2} ... \cup_{\varphi_k}D^{n_k},M)\to (W,M)\]
with  each $n_i \leq \frac{\dim M}{2} + 1$ when $\dim M$ is even and $n_i\leq\frac{\dim M-1}{2}+1$ when $\dim M$ is odd. So,  it suffices to solve the lifting problem 
\[
\begin{tikzcd}
W_{i-1} \arrow[r,"L_{i-1}"] \arrow[d, hook] & E^N_{q} \arrow[d, "p_{q}^N"]
\\ W_i \arrow[r, swap, "F_i"] \arrow[ur, dashed, "L_i"]
& N
\end{tikzcd}
\]
for each $i>0$ where $W_i=M \cup_{\varphi_1}D^{n_1} \cup_{\varphi_2} ... \cup_{\varphi_i}D^{n_i}$, $W_0=M$, $L_0=\lambda$, and $F_i$ is the restriction of $F\circ h$ to $W_i$. 

We recall that the fiber of $p_{q}^N$ is $\ast^{q+1} F^N$, which is $(r(q+1)-2)$-connected since $F^N$ is $(r-2)$-connected. Thus,  $L_{i-1}$ can be extended to $D^{n_i}$ if $n_i - 1 \leq r(q+1) - 2$. Suppose that $\dim M$ is even. Then $\dim M$ and $2r(q+1)-3$ differ by at least $1$ so that our assumption strengthens to $n \leq 2r(q+1)-4$ and the lift $L_i$ exists since $n_i - 1 \leq \frac{\dim M}{2}$. If $\dim M$ is odd, then $n_i-1 \leq 2r(q+1)-3$ by the assumption and again the lift $L_i$ still exists. 

Thus, the lift $L$ exists. Now since $f'$ is a homotopy equivalence, it has some homotopy inverse $g'$ so that $L \circ g'$ is a homotopy section of $p^N_{q}$, but this contradicts the fact that $\secat{p^N} = q+1$.  Hence, $\secat{p^M} \geq \secat{p^N}$.
\end{proof}

\begin{Corollary}\label{cor:general_TCk_Rudyak}
Let $f:M \to N$ be a normal map of degree $1$ between closed smooth manifolds such that $N$ is $(r-1)$-connected for some $r \geq 1$. If $f^{\times k}$ has no surgery obstructions and $N$ satisfies the inequality $5 \leq k \dim N \leq 2r \higherTC{k}{N} - 3$, then $\higherTC{k}{M} \geq \higherTC{k}{N}$
\end{Corollary}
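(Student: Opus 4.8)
The plan is to apply Theorem~\ref{thm:general_Rudyak_conjecture} to the square of path-space fibrations
\[
\begin{tikzcd}
M^I \arrow[r, "f^I"] \arrow[d, swap, "\Delta^M_k"]
& N^I \arrow[d, "\Delta^N_k"]
\\
M^k \arrow[r, swap, "f^{\times k}"]
& N^k
\end{tikzcd}
\]
where $f^I$ is post-composition with $f$. This square commutes strictly: applying $\Delta^N_k$ to $f\circ\alpha$ records the values of $f\circ\alpha$ at the $k$ prescribed points of $I$, which is exactly $f^{\times k}$ applied to the tuple of values of $\alpha$. By definition $\secat{\Delta^M_k}=\higherTC{k}{M}$ and $\secat{\Delta^N_k}=\higherTC{k}{N}$, so it suffices to check that the hypotheses of Theorem~\ref{thm:general_Rudyak_conjecture} hold for this square with $\dim N^k = k\dim N$ playing the role of $\dim N$.

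First I would verify that $f^{\times k}\colon M^k\to N^k$ is a degree-one normal map between closed smooth manifolds. Closedness and smoothness of products are immediate; $f^{\times k}$ has degree one because the fundamental class of a product is the cross product of the fundamental classes, so $(f^{\times k})_\ast[M^k]=[N^k]$; and if $\nu$ is a bundle on $N$ with $\tau_M\oplus f^\ast\nu$ stably trivial, then, writing $\pi_i\colon N^k\to N$ for the $i$-th projection and using $\pi_i\circ f^{\times k}=f\circ\pi_i$, one gets $\tau_{M^k}\oplus(f^{\times k})^\ast\!\bigl(\bigoplus_{i=1}^k\pi_i^\ast\nu\bigr)\cong\bigoplus_{i=1}^k\pi_i^\ast(\tau_M\oplus f^\ast\nu)$, a Whitney sum of stably trivial bundles, hence stably trivial. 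Next, the fiber of $\Delta^N_k$ over a point is the space of paths in $N$ through $k$ prescribed points in order, which splits (by restriction to the $k-1$ subintervals) as a product of two-endpoint path spaces and is therefore homotopy equivalent to $(\Omega N)^{k-1}$; since $N$ is $(r-1)$-connected, $\Omega N$ is $(r-2)$-connected, and a finite product of $(r-2)$-connected spaces is $(r-2)$-connected, so the fiber $F^N$ is $(r-2)$-connected as required. Finally, the assumed condition $5\le k\dim N\le 2r\,\higherTC{k}{N}-3$ is literally $5\le\dim N^k\le 2r\secat{\Delta^N_k}-3$, and the vanishing of the surgery obstruction $\theta(f^{\times k})$ is assumed outright.

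With all hypotheses in place, Theorem~\ref{thm:general_Rudyak_conjecture} yields $\secat{\Delta^M_k}\ge\secat{\Delta^N_k}$, i.e. $\higherTC{k}{M}\ge\higherTC{k}{N}$. The only steps with any content are the normality of $f^{\times k}$ and the connectivity of the fiber of $\Delta^N_k$, and both are routine; the real work has already been carried out in Theorem~\ref{thm:general_Rudyak_conjecture}, so I expect this proof to be short.
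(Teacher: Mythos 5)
Your proposal is correct and is exactly the intended derivation: the paper states the corollary without proof, and the evident argument is to apply Theorem~\ref{thm:general_Rudyak_conjecture} to the square formed by $\Delta^M_k$, $\Delta^N_k$ and $f^{\times k}$, noting that $f^{\times k}$ is again a degree-one normal map, that the fiber of $\Delta^N_k$ is homotopy equivalent to $(\Omega N)^{k-1}$ and hence $(r-2)$-connected, and that the dimension hypothesis becomes $5 \leq k\dim N \leq 2r\higherTC{k}{N}-3$. Your verifications of normality and fiber connectivity are sound, so nothing further is needed.
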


\section{The Simply-Connected Case for Higher TC}

Unfortunately, we can no longer follow the methodology of \cite{Dranishnikov_Scott_2020} because a formula for $\higherTC{k}{M \# M'}$ is unknown for any $k$, even with the assumption that $M=M'$. Instead, we consider the case where the codomain $N$ is simply connected since surgery is vastly simpler with this assumption. In this case, we're able to get the exact result we want in three cases out of four.

\begin{Theorem} \label{thm:simply-connected_TCk_Rudyak}
Let $f:M \to N$ be a normal map of degree $1$ between closed smooth manifolds such that $N$ is $(r-1)$-connected for some $r \geq 2$ (i.e., $N$ is simply connected). Suppose that $N$ satisfies the inequality $5 \leq k \dim N \leq 2r \higherTC{k}{N} - 3$. If $\dim N \not\equiv 0 (\text{\rm mod} 4)$, then $\higherTC{k}{M} \geq \higherTC{k}{N}$.
\end{Theorem}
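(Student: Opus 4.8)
The plan is to deduce the statement from Corollary \ref{cor:general_TCk_Rudyak}, which has the same conclusion under the same hypotheses plus the additional requirement that $f^{\times k}$ have vanishing surgery obstruction. Since $N$ is $(r-1)$-connected with $r\ge 2$, both $N$ and $N^{\times k}$ are simply connected, so $f^{\times k}: M^{\times k}\to N^{\times k}$ is a degree one normal map whose surgery obstruction $\theta(f^{\times k})$ lies in $L_{k\dim N}(\pi_1(N^{\times k})) = L_{k\dim N}(0)$; this obstruction is defined because $k\dim N\ge 5$. Hence the whole task is to prove $\theta(f^{\times k}) = 0$ whenever $\dim N\not\equiv 0\pmod 4$, after which Corollary \ref{cor:general_TCk_Rudyak} gives $\higherTC{k}{M}\ge\higherTC{k}{N}$.

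To compute $\theta(f^{\times k})$ I would apply Theorem \ref{thm:surgery_product_formula} once to the factorization $f^{\times k} = f\times f^{\times(k-1)}$ and then run a short case analysis on $n\bmod 4$ and $k\bmod 2$, writing $n = \dim M = \dim N$. Throughout, $I(N) = 0$ because the signature vanishes outside dimensions divisible by $4$, whence $I(N^{\times(k-1)}) = I(N)^{k-1} = 0$ by multiplicativity of the signature. If $kn$ is odd, then $L_{kn}(0) = 0$ and there is nothing to prove. If $n$ is odd and $k$ is even, then $\theta(f)\in L_n(0) = 0$ and $\chi(N) = 0$ since $N$ is an odd-dimensional closed manifold; now in part (i) of Theorem \ref{thm:surgery_product_formula} each term is a multiple of $I(N) = 0$ or of $\theta(f) = 0$, and in part (ii) each term is a multiple of $\chi(N) = 0$ or of $\theta(f) = 0$, so $\theta(f^{\times k}) = 0$.

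The case $n\equiv 2\pmod 4$ is the delicate one, since $\theta(f)\in L_n(0) = \mathbb{Z}_2$ is a Kervaire invariant and may be nonzero. The extra ingredient is that $\chi(N)$ is even: for $n/2$ odd the cup-product pairing on $H^{n/2}(N;\mathbb{Q})$ is a nondegenerate skew-symmetric form, so $b_{n/2}(N)$ is even, and Poincaré duality gives $\chi(N)\equiv b_{n/2}(N)\equiv 0\pmod 2$; by multiplicativity of the Euler characteristic, $\chi(N^{\times(k-1)}) = \chi(N)^{k-1}\equiv 0\pmod 2$ as well. If $k$ is odd, then $kn\equiv 2\pmod 4$ and part (ii) gives $\theta(f^{\times k}) = \chi(N)\theta(f^{\times(k-1)}) + \chi(N^{\times(k-1)})\theta(f) = 0$. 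If $k$ is even, then $kn\equiv 0\pmod 4$ while $(k-1)n\equiv 2\pmod 4$, so $\theta(f^{\times(k-1)})\in\mathbb{Z}_2$, and part (i) gives $\theta(f^{\times k}) = I(N)\theta(f^{\times(k-1)}) + I(N^{\times(k-1)})\theta(f) + 8\,\theta(f)\,\theta(f^{\times(k-1)}) = 8\,\theta(f)\,\theta(f^{\times(k-1)})$, which vanishes because $8$ annihilates $\mathbb{Z}_2$ (the Remark following Theorem \ref{thm:surgery_product_formula}). Thus $\theta(f^{\times k}) = 0$ in every case, and the theorem follows from Corollary \ref{cor:general_TCk_Rudyak}.

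I expect the only real obstacle to be bookkeeping: for each combination of $n\bmod 4$ and $k\bmod 2$ one must correctly identify which $L$-group each of $\theta(f)$, $\theta(f^{\times(k-1)})$, and $\theta(f^{\times k})$ lives in, which of the two product formulas applies, and which degenerate convention from the Remark is in force (an odd-dimensional factor, $8$ acting on $\mathbb{Z}_2$, or $I$ evaluated on a factor of dimension $\equiv 2\pmod 4$). There is no genuine difficulty, and in particular no induction on $k$ is needed: one application of Theorem \ref{thm:surgery_product_formula} to $f\times f^{\times(k-1)}$ handles every case, since each potentially nonzero term is killed by $I(N) = 0$, by $\chi(N)$ being even, by $\theta(f) = 0$ in the odd-dimensional case, or by the coefficient $8$.
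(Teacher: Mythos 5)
Your proposal is correct and follows essentially the same route as the paper: reduce to Corollary \ref{cor:general_TCk_Rudyak} by showing $\theta(f^{\times k})=0$, apply Theorem \ref{thm:surgery_product_formula} to the splitting $f\times f^{\times(k-1)}$ (together with the conventions in the Remark), and in the case $\dim N\equiv 2\ (\mathrm{mod}\ 4)$ use that $\chi(N)$ is even. Your write-up is merely more explicit in the bookkeeping (e.g., justifying the evenness of $\chi(N)$ via the skew-symmetric middle-dimensional pairing, where the paper cites Hatcher), so no substantive difference.
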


\begin{proof}
First we assume $\dim N$ is odd. In this case, $\theta(f^{\times k}) = 0$ for all $k \geq 1$ by Theorem \ref{thm:surgery_product_formula} so that $\higherTC{k}{M} \geq \higherTC{k}{N}$ by Corollary \ref{cor:general_TCk_Rudyak}.

Now assume that $\dim N \equiv 2 (\text{\rm mod } 4)$. Then $\theta(f^{\times k}) = 0$ for all $k$ even by Theorem \ref{thm:surgery_product_formula}. Now suppose $k \geq 3$ is odd so that $\dim N^k \equiv 2 (\text{\rm mod } 4)$. Then
\[ \theta(f^{\times k})
= \chi(N) \theta(f^{k-1}) + \chi(N^{k-1}) \theta(f)
= \chi(N)^{k-1} \theta(f).\]
Since $\dim N \equiv 2 (\text{\rm mod } 4)$, it follows that $\chi(N)$ is even (see page 252 of \cite{Hatcher_2001}) so that $\theta(f^{\times k}) = 0$ in $\mathbb{Z}_2$.
\end{proof}

In order to prove anything about the $4k$-dimensional case, we first need to be able to take connected sums of normal maps of degree one $f:M \to N$. In \cite{Dranishnikov_Scott_2020}, this was done by changing the manifold $M$ in the domain of $f$ via surgery without changing its LS-category. It's difficult to compute the TC after surgery, so instead we prove Lemma \ref{lem:trivial_preimage} and Proposition \ref{prop:connected_sum_maps} below to show that taking connected sums of normal maps of degree one maps is well-defined up to homotopy.

The proofs of Lemma \ref{lem:trivial_preimage} and Proposition \ref{prop:connected_sum_maps} are extracted from the proof of Lemma 13.42 of \cite{Crowley_Luck_Macko_2021}:

\begin{Lemma}
\label{lem:trivial_preimage}
Let $f:\mathbb{R}^n \to \mathbb{R}^n$ be a smooth map such that $0 \in \mathbb{R}^n$ is a regular value of $f$ with preimage $f^{-1}(0) = \{ x_0,x_1 \}$ contained in the interior of the disk $D^n \subset \mathbb{R}^n$. If the differentials $T_{x_0}f:T_{x_0}\mathbb{R}^n \to T_0 \mathbb{R}^n$ and $T_{x_1}f:T_{x_1}\mathbb{R}^n \to T_0 \mathbb{R}^n$ are orientation preserving and orientation reversing, respectively, then $f$ can be changed up to homotopy relative $\mathbb{R}^n \setminus D^n$ such that $f^{-1}(0)$ is empty.
\end{Lemma}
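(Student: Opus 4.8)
The plan is to reduce the statement to a purely local picture near the two preimage points and then cancel them using a ``Whitney-trick-flavored'' isotopy of the value $0$, or equivalently a finger-move homotopy of $f$. First I would set up coordinates: since $0$ is a regular value and $f^{-1}(0)=\{x_0,x_1\}\subset \mathrm{Int}(D^n)$, by the inverse function theorem there are disjoint small closed disk neighborhoods $B_0,B_1\subset \mathrm{Int}(D^n)$ of $x_0,x_1$ on which $f$ is a diffeomorphism onto a fixed small ball $B_\varepsilon$ around $0$. Outside $B_0\cup B_1$ the map $f$ misses $0$ entirely. The hypothesis that $T_{x_0}f$ is orientation preserving and $T_{x_1}f$ is orientation reversing is exactly the condition that the two sheets $f|_{B_0}$ and $f|_{B_1}$ hit $0$ with opposite local degrees, i.e. the ``algebraic count'' of $f^{-1}(0)$ is zero, which is what makes cancellation possible.

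Next I would perform the cancellation. Choose an embedded arc $\gamma$ in $\mathrm{Int}(D^n)$ from $x_0$ to $x_1$ meeting $B_0\cup B_1$ only in short initial/terminal segments and otherwise lying in the region where $f\neq 0$; thicken $\gamma$ to an embedded disk $D\cong D^n$ containing $B_0\cup B_1$ in its interior and contained in $\mathrm{Int}(D^n)$. Now the whole problem is local to $D$: we have a smooth map $D\to\mathbb R^n$ which equals a fixed nonzero value near $\partial D$ (after a preliminary homotopy rel $\partial D$ we may even assume $f\equiv v_0$ on a collar of $\partial D$ for some fixed $v_0\neq 0$) and whose only zeros are the two opposite-sign points $x_0,x_1$. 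The class of such a map rel boundary is measured by its degree onto $0$, which is $(+1)+(-1)=0$; hence $f|_D$ is homotopic rel $\partial D$ to a map with no zeros. Concretely one can write down the homotopy by ``dragging'' $x_1$ along $\gamma$ toward $x_0$ — using the product structure $f\simeq (\text{linear iso})$ on $B_0\cup B_1$ and interpolating along the tube around $\gamma$ — until the two sheets coincide with opposite orientation and annihilate; because the tube was chosen in the zero-free region, nothing new maps to $0$ during the isotopy. Patching this local homotopy with the constant homotopy on $\mathbb R^n\setminus D\supset \mathbb R^n\setminus D^n$ yields the desired homotopy rel $\mathbb R^n\setminus D^n$ with $f^{-1}(0)=\emptyset$.

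I expect the main obstacle to be making the ``dragging'' homotopy precise while keeping it supported in $D$ and keeping $0$ out of the image away from the two shrinking disks: one must verify that the chosen framings on $B_0$ and $B_1$ (given by $T_{x_0}f$ and $T_{x_1}f$) can be connected through nonsingular linear maps \emph{with the orientation reversal built in}, i.e. that $GL_n(\mathbb R)$ having two components is precisely accounted for by one sheet being orientation preserving and the other orientation reversing, so that the two local models are ``opposite'' and cancel rather than ``the same'' and double up. Equivalently, one identifies $D$ minus a collar with $S^{n-1}\times[0,1]$-like coordinates and checks that the attaching data of the two zeros gives the trivial element of $\pi_{n-1}(S^{n-1})$; the orientation hypothesis is exactly the statement that this element is $1-1=0$. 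Once that linear-algebra/degree bookkeeping is in place, the construction of the explicit homotopy is routine, and one concludes that $f$ may be modified rel $\mathbb R^n\setminus D^n$ to have empty preimage of $0$.
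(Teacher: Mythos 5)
Your proposal is correct and, at its core, is the same argument as the paper's: the opposite local degrees $+1$ and $-1$ cancel, so the restriction of $f$ to the boundary sphere of a disk containing both zeros is nullhomotopic in $\mathbb{R}^n\setminus\{0\}$, hence extends over that disk as a map missing $0$, and gluing this extension to $f$ outside and using the straight-line homotopy in $\mathbb{R}^n$ gives the required homotopy rel $\mathbb{R}^n\setminus D^n$. The additional localization to a tube around an arc joining $x_0$ and $x_1$, and the explicit ``dragging'' construction with its framing bookkeeping (including the preliminary homotopy making $f$ constant near the boundary), are not needed for this lemma --- the paper argues directly on the given $D^n$ and saves the arc-and-tube maneuver for Proposition \ref{prop:connected_sum_maps} --- but they do no harm, since your degree-zero extension argument already closes the proof.
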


\begin{proof}
First let $\varepsilon >0$ be small enough such that $f(S^{n-1})$ is disjoint from $\varepsilon D^{n}$. Let $r_\varepsilon:\mathbb{R}^n \to \varepsilon D^n$ be the retraction given by
\[ r_\varepsilon(x) =
\begin{cases}
x & \text{if } \left\| x \right\| \leq \varepsilon \\
\varepsilon \frac{x}{\left\| x \right\|} & \text{if } \left\| x \right\| \geq \varepsilon
\end{cases}.\]
Then $r_\varepsilon \circ f$ induces a map $(D^n,S^{n-1}) \to (\varepsilon D^n, \varepsilon S^{n-1})$ of compact oriented manifolds. Since $T_{x_0}f:T_{x_0}\mathbb{R}^n \to T_0 \mathbb{R}^n$ and $T_{x_1}f:T_{x_1}\mathbb{R}^n \to T_0 \mathbb{R}^n$ have opposite degrees, it follows that $0 \in \varepsilon D^n$ has degree 0; hence, the induced map $S^{n-1} \to \varepsilon S^{n-1}$ has degree 0 and in nullhomotopic. Therefore, the map $f_0:S^{n-1} \to \mathbb{R} \setminus \{ 0 \}$ induced by $f$ is also nullhomotopic so that is extends to a map $f_1:D^n \to \mathbb{R}^n \setminus \{ 0 \}$. Now define $f':\mathbb{R}^n \to \mathbb{R}^n \setminus 0$ such that $f'|_{\mathbb{R}^n \setminus D^n} = f|_{\mathbb{R}^n \setminus D^n}$ and $f'|_{D^n} = f_1$. Note that the mapping
\[ (x,t) \mapsto tf'(x) + (1-t)f(x) \]
defines a homotopy from $f'$ to $f$. Furthermore, $0$ is not in the image of $f'$ so that the result follows.
\end{proof}

\begin{Proposition}
\label{prop:connected_sum_maps}
Let $f:M \to N$ be a map of degree 1 with between smooth closed $n$-manifolds such that $N$ is simply-connected. Then $f$ is homotopic to some map $g:M \to N$ such that there is some disk $D^n \subset N$ with $g|_{g^{-1}(D^{n})}:g^{-1}(D^{n}) \to D^{n}$ a diffeomorphism. Moreover, if $f$ is normal then $g$ is normal.
\end{Proposition}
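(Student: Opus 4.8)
The plan is to use the degree hypothesis to locate a regular value whose preimage is a finite set of signed points, then algebraically cancel the excess preimage points in pairs using Lemma~\ref{lem:trivial_preimage}, arriving at a map that is a diffeomorphism over a small disk. First I would homotope $f$ to a smooth map and pick a regular value $y_0 \in N$ (Sard's theorem); since $N$ is connected, $f^{-1}(y_0)$ is a finite set of points $x_1,\dots,x_m$, and because $\deg f = 1$ the signs $\varepsilon_i = \pm 1$ of the differentials $T_{x_i}f$ sum to $1$. Choosing a disk $D^n \subset N$ around $y_0$ small enough that $f^{-1}(D^n)$ is a disjoint union of $m$ small disks each mapped diffeomorphically onto $D^n$, I reduce to the following local cancellation problem: whenever two of these disks carry opposite signs, I want to merge the arcs and apply the lemma to delete that pair of preimage points.

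The key step is the pairwise cancellation. Since $M$ is connected, I can choose an embedded arc in $M$ joining a ``$+$'' point $x_i$ to a ``$-$'' point $x_j$, disjoint from the other preimage disks, and a thickening of this arc gives an embedded disk $D^n_{M}\subset M$ containing exactly $x_i$ and $x_j$ in its interior. Its image under $f$ can be arranged (after a small homotopy pushing the arc's image, using simple-connectedness of $N$ only to the extent of moving things within a ball) to land in a coordinate ball $D^n_N \subset N$ that contains $D^n$; identifying both balls with $\mathbb{R}^n$, the restriction of $f$ is exactly the situation of Lemma~\ref{lem:trivial_preimage} — two regular preimage points of $0$ with opposite-orientation differentials — so $f$ may be changed up to homotopy rel the complement of $D^n_M$ to have empty preimage of $y_0$ inside $D^n_M$. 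This removes two points from $f^{-1}(y_0)$ while leaving the signed count at $1$; iterating $m-1$ steps cancels all but a single preimage point, and near that point $f$ is already a diffeomorphism onto a disk, so setting $g$ to be the resulting map and shrinking $D^n$ if necessary completes the topological part.

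For the normality claim: write $\tau_M \oplus f^*\nu$ stably trivial for the bundle $\nu$ on $N$ witnessing normality of $f$. Each modification above is a homotopy of $f$, and homotopic maps pull back stably isomorphic bundles, so $g^*\nu \cong f^*\nu$ and hence $\tau_M \oplus g^*\nu$ is still stably trivial; thus $g$ is normal with the same bundle data $\nu$. (One should also note $g$ still has degree $1$ since degree is a homotopy invariant.)

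The main obstacle I expect is the bookkeeping in the cancellation step: making sure that when I choose the arc in $M$ and push its $f$-image into a single coordinate ball of $N$, I do not disturb the other preimage points or re-introduce new ones, and that the resulting local picture genuinely matches the hypotheses of Lemma~\ref{lem:trivial_preimage} (in particular that the two points really do get opposite orientation signs in a consistent local trivialization). Simple-connectedness of $N$ is what guarantees the arc's image can be homotoped into a ball rel endpoints without obstruction; in the non-simply-connected case this fails in general, which is exactly why the hypothesis is imposed. Everything else — Sard, transversality to get the disjoint-disk normal form, homotopy invariance of pullback bundles — is routine.
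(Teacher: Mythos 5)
Your proposal is correct and follows essentially the same route as the paper: find a regular value, use $\deg f = 1$ to pair off opposite-sign preimage points, join each pair by an embedded arc whose thickening maps (after using simple-connectedness of $N$ to push its image into a coordinate ball) into the local situation of Lemma~\ref{lem:trivial_preimage}, cancel, iterate, and deduce normality of $g$ from homotopy invariance of the pulled-back bundle. The bookkeeping concerns you flag are real but are handled the same way (and with the same level of detail) in the paper's argument, so no change of strategy is needed.
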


\begin{proof}
We first choose orientations on $M$ and $N$. Since the cardinality of the fiber of a regular value of smooth degree one map is locally constant, it suffices to homotope $f$ such that some regular value has a fiber consisting of a single point. Since regular values have full measure in the codomain, it follows that $f$ has some regular value $y \in N$. Since $y$ is a regular value, its fiber is finite. Moreover,
\[ 1 = \deg f = \sum_{x \in f^{-1}(y)} \deg(f,x) \]
where $\deg(f,x) = 1$ if $T_xf:T_xM \to T_yN$ is orientation preserving and $\deg(f,x) = -1$ if $T_xf$ is orientation reversing. Thus, we can enumerate $f^{-1}(y) = \{ x_0,...,x_{2m}\}$ such that $\deg(f,x_i) = 1$ if $i$ is even and $\deg(f,x) = -1$ if $i$ is odd. All that remains is to describe a procedure that homotopes $f$ to some $g$ such that $g^{-1}(y) = \{ x_0,...,x_{2m-2} \}$ since such a process can simply be repeated till $g^{-1}(y) = \{x_0\}$.

 Let $\alpha$ be an embedded arc in $M$ joining $x_{2m-1}$ with $x_{2m}$, and let $\text{Tub}(\alpha)$ be a tubular neighborhood of $\alpha$ inside of $M$. Let $V$ be an open neighborhood of $y$ diffeomorphic to $\mathbb{R}^n$. Since $N$ is simply-connected, we can choose a homotopy $r_t:N \to N$ rel $V$ such that $r_1 \circ f(\text{Tub}) \subset V$. Thus, if we take the restriction $r_1 \circ f|_{\text{Tub}(\alpha)}:\text{Tub}(\alpha) \simeq \mathbb{R}^n \to V \simeq \mathbb{R}^n$, then we are in the case of Lemma \ref{lem:trivial_preimage}. Therefore, up to homotopy we can remove $x_{2m-1}$ and $x_{2m}$ from the preimage of $y$, i.e., $f$ is homotopic to some $g$ such that $g^{-1}(y) = \{ x_0,...,x_{2m-2} \}$. Moreover, since homotopies lift to bundle maps, the normality of $f$ would imply the normality of $g$.
\end{proof}

\begin{Remark}
Proposition \ref{prop:connected_sum_maps} shows that taking connected sums of normal maps of degree 1 is, up to homotopy, a well-defined operation when the codomain is simply connected.
\end{Remark}

\begin{Theorem}
Let $f:M \to N$ be a normal map of degree one between $(r-1)$-connected ($r \geq 2$) smooth closed $4n$-manifolds, and assume that $M$ satisfies the inequality
\[\max \left\{ \cat{M} + 2, \frac{4n+2}{r} \right\} \leq \TC{M}.\]
Then $\TC{M} \geq \TC{N}$.
\end{Theorem}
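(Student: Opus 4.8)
The plan is to reduce the statement to Corollary~\ref{cor:general_TCk_Rudyak} (with $k=2$) applied, not to $f$ itself, but to a connected sum $f\#g:M\#P\to N$ whose square has vanishing surgery obstruction, and then to pay for replacing $M$ by $M\#P$ using Theorems~\ref{thm:connected_sum_inequality} and~\ref{thm:wedge_equation}. The first move is to read off two consequences of the hypothesis $\max\{\cat{M}+2,\tfrac{4n+2}{r}\}\le\TC{M}$. Since a simply connected closed manifold of Lusternik--Schnirelmann category at most $1$ is a homotopy sphere, $\cat{M}\le 1$ would force $M\simeq S^{4n}$ and hence $\TC{M}=2<3\le\cat{M}+2$, a contradiction; therefore $\cat{M}\ge 2$ and $\TC{M}\ge\cat{M}+2\ge 4$. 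The same observation shows $M$ is not a homotopy $4n$-sphere, hence not $2n$-connected, so $r-1\le 2n-1$, i.e.\ $r\le 2n$; in particular every $(2n-1)$-connected closed $4n$-manifold is $(r-1)$-connected.

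Next I would dispose of a trivial case: if $2r\TC{N}\le 8n+2$, then $\TC{N}\le\tfrac{4n+1}{r}<\tfrac{4n+2}{r}\le\TC{M}$ and we are done. So I may assume $5\le 2\dim N=8n\le 2r\TC{N}-3$, which is precisely the numerical hypothesis needed to run Corollary~\ref{cor:general_TCk_Rudyak} with $k=2$ and codomain $N$. By Lemma~\ref{lem:closed_plumbing}, choose a $(2n-1)$-connected closed $4n$-manifold $P$ and a normal degree-one map $g:P\to S^{4n}$ with $\theta(g)=-\theta(f)$. Since $N$ and $S^{4n}$ are simply connected, Proposition~\ref{prop:connected_sum_maps} produces a normal degree-one map $f\#g:M\#P\to N\#S^{4n}=N$, well defined up to homotopy, and additivity of the signature gives $\theta(f\#g)=\theta(f)+\theta(g)=0$. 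Because $\dim\big((M\#P)\times(M\#P)\big)=8n\equiv 0\pmod 4$, Theorem~\ref{thm:surgery_product_formula}(i) gives $\theta\big((f\#g)^{\times 2}\big)=2\,I(N)\,\theta(f\#g)+8\,\theta(f\#g)^{2}=0$. Since $r\le 2n$, the manifold $M\#P$ is $(r-1)$-connected, so Corollary~\ref{cor:general_TCk_Rudyak} applies to $f\#g$ and gives $\TC{M\#P}\ge\TC{N}$.

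It then remains to bound $\TC{M\#P}$ above by $\TC{M}$. As $M$ and $P$ are closed $(r-1)$-connected $4n$-manifolds and $\TC{M}\ge\tfrac{4n+2}{r}$, Theorem~\ref{thm:connected_sum_inequality} gives $\TC{M\#P}\le\TC{M\vee P}$, and Theorem~\ref{thm:wedge_equation} gives $\TC{M\vee P}\le\max\{\TC{M},\TC{P},\cat{M\times P}\}$. The standard connectivity-versus-dimension estimate gives $\cat{P}\le\tfrac{4n}{2n}=2$ and $\TC{P}\le\cat{P\times P}\le\tfrac{8n}{2n}=4$, whence $\TC{P}\le 4\le\TC{M}$ by the first paragraph and $\cat{M\times P}\le\cat{M}+\cat{P}\le\cat{M}+2\le\TC{M}$ by hypothesis. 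Therefore $\TC{M\#P}\le\TC{M}$, and combining with the previous paragraph, $\TC{N}\le\TC{M\#P}\le\TC{M}$.

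The main obstacle I anticipate is this last step, getting $\TC{M}$ back from $\TC{M\#P}$: it is the place where both halves of the hypothesis are genuinely used, with $\tfrac{4n+2}{r}\le\TC{M}$ licensing Theorem~\ref{thm:connected_sum_inequality} and $\cat{M}+2\le\TC{M}$ both forcing $\TC{M}\ge 4\ge\TC{P}$ and absorbing the term $\cat{M\times P}$; it is the substitute, in this setting, for the connected-sum formula $\cat{M_1\#M_2}=\max\{\cat{M_1},\cat{M_2}\}$ that powers the argument of \cite{Dranishnikov_Scott_2020}. A secondary technical point --- the well-definedness of $f\#g$ as a normal degree-one map and the identity $\theta(f\#g)=\theta(f)+\theta(g)$ --- is handled by Proposition~\ref{prop:connected_sum_maps} together with the signature identities recalled just before Lemma~\ref{lem:closed_plumbing}.
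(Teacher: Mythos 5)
Your proposal is correct and follows essentially the same route as the paper: construct $P$ and $g:P\to S^{4n}$ via Lemma \ref{lem:closed_plumbing}, form $f\#g:M\#P\to N$ via Proposition \ref{prop:connected_sum_maps} so that the surgery obstruction vanishes, apply Corollary \ref{cor:general_TCk_Rudyak}/Theorem \ref{thm:general_Rudyak_conjecture} to get $\TC{M\#P}\geq\TC{N}$, and then bound $\TC{M\#P}$ by $\TC{M}$ using Theorems \ref{thm:connected_sum_inequality} and \ref{thm:wedge_equation} together with $\cat{P}\leq 2$, $\TC{P}\leq 4$. The only differences are cosmetic or in your favor: you argue directly via a case split rather than by contradiction, and you make explicit two points the paper leaves implicit, namely that $\theta\left((f\#g)^{\times 2}\right)=0$ by the product formula of Theorem \ref{thm:surgery_product_formula} and that $r\leq 2n$ so that $P$ is indeed $(r-1)$-connected as required by Theorem \ref{thm:connected_sum_inequality}.
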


\begin{proof}
Note that if $\cat{M} = 1$, then $M$ is a sphere so that $f$ has to be a homotopy equivalence and $\cat{M} = \cat{N}$. Therefore, we can assume $\cat{M} \geq 2$.

Now we assume by way of contradiction that $\TC{N} > \TC{M}$. Then we get that
\[ \frac{4n+2}{r} \leq \TC{M} < \TC{N} \]
so that rearranging gives us the inequality
\[ 8n \leq r\TC{N}-4 \leq r\TC{N}-3.\]
If $n=0$, then $\TC{M} = \TC{N} = 0$ so we can assume $n>1$ to retrieve the inequality $5 \leq 8n$ as well. Now we simply need to construct the normal map of degree one on which to apply Theorem \ref{thm:general_Rudyak_conjecture}. By Lemma \ref{lem:closed_plumbing}, there is some $(2n-1)$-connected smooth closed manifold $P$ and some normal map of degree one $g:P \to S^{4n}$ such that $\theta(g) = -\theta(f)$. Then by Proposition \ref{prop:connected_sum_maps}, $f \# g:M \# P \to N$ is a normal map of degree one. Moreover, $\theta(f \# g) = 0$ since $\theta(g) = - \theta(f)$. Therefore, by Theorem \ref{thm:general_Rudyak_conjecture} we get $\TC{M \# P} \geq \TC{N}$.

Since $\frac{4n+2}{r} \leq \TC{M}$, we can apply Theorem \ref{thm:connected_sum_inequality} to get $\TC{M \# P} \leq \TC{M \vee P}$. Moreover, Theorem \ref{thm:wedge_equation} gives us an equation for $\TC{M \vee P}$ so that we have the inequality
\[ \TC{N} \leq \TC{M \vee P} = \max \{ \TC{M}, \TC{P}, \cat{M \times P} \}.\]
Since $\cat{P} \leq 2$ and $\TC{P} \leq 4$, it follows that
\[ \TC{N} \leq \max \{ \TC{M}, 4, \cat{M} + 2 \}\]
Note that $\cat{M} + 2 \leq \TC{M}$ by assumption. Furthermore, $\cat{M} \geq 2$ (see beginning of proof) so that $4 \leq \TC{M}$. Therefore,
\[\TC{N} \leq \TC{M}. \qedhere\]
\end{proof}

\section*{Acknowledgements}
I would like to thank my advisor, Alexander Dranishnikov, for all of his help and encouragement throughout this project.

\footnotesize
\bibliographystyle{plain}
\bibliography{main}

\begin{thebibliography}{10}

\bibitem{Browder_1972}
W.~Browder.
\newblock {\em Surgery on {S}imply-{C}onnected {M}anifolds}.
\newblock Springer, 1972.

\bibitem{Crowley_Luck_Macko_2021}
D.~Crowley, W.~L\"uck, and T.~Macko.
\newblock {\em Surgery Theory: Foundations}.
\newblock unpublished, August 2021.

\bibitem{Dranishnikov_Sadykov_2019}
A.~Dranishnikov and R.~Sadykov.
\newblock On the {LS}-{C}ategory and {T}opological {C}omplexity of a
  {C}onnected {S}um.
\newblock {\em Proceedings of the American Mathematical Society},
  147(5):2235--2244, 2019.

\bibitem{Dranishnikov_Sadykov_2020}
A.~Dranishnikov and R.~Sadykov.
\newblock The {L}usternik-{S}chnirelmann {C}ategory of {C}onnected {S}um.
\newblock {\em Fundamenta Mathematicae}, 251:317--328, 2020.

\bibitem{Dranishnikov_Scott_2020}
A.~Dranishnikov and J.~Scott.
\newblock Surgery {A}pproach to {R}udyak's {C}onjecture.
\newblock {\em To appear in Topology and its Applications}.

\bibitem{Farber_2003}
M.~Farber.
\newblock Topological {C}omplexity of {M}otion {P}lanning.
\newblock {\em Discrete Comput Geom}, 29:211--221, 2003.

\bibitem{Farber_2008}
M.~Farber.
\newblock {\em Invitation to {T}opological {R}obotics}.
\newblock Z\"urich Lectures in Advanced Mathematics. European Mathematical
  Society (EMS), Z\"urich, 2008.

\bibitem{Hatcher_2001}
A.~Hatcher.
\newblock {\em Algebraic Topology}.
\newblock Cambridge University Press, 2001.

\bibitem{Lusternik_Schnirelmann_1929}
L.~Lusternik and L.~Schnirelmann.
\newblock Sur le probleme de trois geodesiques fermees sur les surfaces de
  genre 0.
\newblock {\em Comptes Rendus de l'Academie des Sciences de Paris},
  189:269--271, 1929.

\bibitem{Rudyak_1999}
Y.~Rudyak.
\newblock On {C}ategory {W}eight and its {A}pplications.
\newblock {\em Topology}, 38(1):37--55, 1999.

\bibitem{Rudyak_2010}
Y.~Rudyak.
\newblock On {H}igher {A}nalogs of {T}opological {C}omplexity.
\newblock {\em Topology and its Applications}, 157:916--920, 2010.

\bibitem{Rudyak_2017}
Y.~Rudyak.
\newblock Maps of {D}egree 1 and {L}usternik-{S}chnirelmann {C}ategory.
\newblock {\em Topology and its Applications}, 221:225--230, 2017.

\bibitem{Rudyak_Sarkar_2020}
Y.~Rudyak and S.~Sarkar.
\newblock Maps of {D}egree {O}ne, {R}elative {LS} {C}ategory and {H}igher
  {T}opological {C}omplexities.
\newblock {\em preprint arXiv: 2011.13531v2}, 2020.

\bibitem{Schwarz_1966}
A.~S. Schwarz.
\newblock The {G}enus of a {F}iber {S}pace.
\newblock In {\em American Mathematical Society Translations Series 2 Volume
  55}, pages 49--140. 1966.

\bibitem{Wall_1999}
C.T.C. Wall.
\newblock {\em Surgery on {C}ompact {M}anifolds}.
\newblock AMS, 2nd edition, 1999.

\bibitem{Zapata_2021}
C.~Zapata.
\newblock Topological {C}omplexity of {W}edges.
\newblock {\em preprint arXiv: 1712.06779v2}, 2021.

\end{thebibliography}

\end{document}